\def\BibTeX{{\rm B\kern-.05em{\sc i\kern-.025em b}\kern-.08em
    T\kern-.1667em\lower.7ex\hbox{E}\kern-.125emX}}
\newtheorem{theorem}{Theorem}
\newtheorem{proposition}{Proposition}
\newtheorem{lemma}{Lemma}
\newtheorem{remark}{Remark}
 \newtheorem{assumption}{Assumption}
\title{\LARGE \bf Distributed Estimation}
\newcommand{\ncom}{\newcommand}
\newcommand{\beqn}{\begin{eqnarray*}}
\newcommand{\eeqn}{\end{eqnarray*}}
\newcommand{\beq}{\begin{eqnarray}}
\newcommand{\eeq}{\end{eqnarray}}
\newcommand{\norm}[1]{\left\lVert #1 \right\rVert}
\newcommand{\inprod}[2]{\left\langle #1, #2 \right\rangle}
\ncom\R{\mathbb{R}}
\author{Anik Kumar Paul$^{1}$, Karthik Shenoy$^{2}$, Arun D. Mahindrakar$^{3}$
\thanks{Anik  is a Wallmart Postdoc at  IISc, Bengaluru, India {email: anikpaul42@gmail.com}}. \thanks{Karthik is a doctoral student in the Department of Electrical Engineering, IIT Madras, Chennai-600036, India, {email: ee21d405@smail.iitm.ac.in}
        }
        \thanks{Arun is with the Department of Electrical Engineering, Indian Institute of Technology Madras, Chennai-600036, India (email: arun\_dm@iitm.ac.in) }
    }
\begin{document}
\title{Stochastic Recursive Inclusions under Biased Perturbations: An Input-to-State Stability Perspective}
\maketitle
\thispagestyle{empty}
\pagestyle{empty}
\begin{abstract}
This paper investigates the asymptotic behavior of stochastic recursive inclusions in the presence of non-zero, non-diminishing bias, a setting that frequently arises in zeroth-order optimization, stochastic approximation with iterate-dependent noise, and distributed learning with adversarial agents. The analysis is conducted through the lens of input-to-state stability of an associated differential inclusion, which serves as the continuous-time limit of the discrete recursion.
We first establish that if the limiting differential inclusion is input-to-state stable and the iterates remain almost surely bounded, then the iterates converge almost surely to the neighborhood of desired equilibrium. We then provide a verifiable sufficient condition for almost sure boundedness by assuming that the underlying operator is single-valued and globally Lipschitz. Finally, we show that several zeroth-order variants of stochastic gradient  naturally fit within this framework, and we demonstrate their input-to-state stability under standard conditions. Overall, the results provide a unified theoretical foundation for studying almost sure convergence of biased stochastic approximation schemes through the Input to State stability theory of differential inclusions.
\end{abstract}
\begin{IEEEkeywords}
Stochastic Recursive Inclusions,
Input-to-State Stability, Set-Valued Analysis, Stochastic Zeroth-Order Gradient Methods
\end{IEEEkeywords}
\section{Introduction}
In this paper, we study the stochastic recursive inclusion  
\begin{equation}
\begin{split}
x_{n+1} - x_n - \alpha_n M_{n+1} 
&\in \alpha_n G(x_n) \\
&\in \alpha_n\big(H(x_n) + \Bar{B}(0,\epsilon)\big),
\end{split}
\label{eq:sri}
\end{equation}
where $\{x_n\}$ denotes the iterates, $\{\alpha_n\}$ is a step-size sequence, $\{M_{n+1}\}\subset \mathbb{R}^d$ is a martingale difference sequence with respect to standard filtration, $H:\mathbb{R}^d \rightrightarrows \mathbb{R}^d$ is a set-valued map, and $\Bar{B}(0,\epsilon)$ is the closed ball of radius $\epsilon$ centered at the origin.

Recursions of the form \eqref{eq:sri} appear widely in stochastic approximation, especially in problems where one seeks a point $x^\ast$ satisfying $0 \in H(x^\ast)$ \cite{borkar2008stochastic}. A prominent example is the stochastic subgradient method, where $H(x)$ corresponds to the subdifferential, $M_{n+1}$ models zero-mean noise, and $\epsilon$ represents a non-zero bias. Similar formulations arise across several learning and estimation settings, including Q-learning \cite{watkins1992q}, neural network--based system identification \cite{haykin2004comprehensive,benaim1995global,pham1995neural}, adaptive signal processing \cite{haykin2002adaptive,298540}, and parameter tracking in linear dynamical systems with noisy measurements \cite{guo1995performance,solo1994adaptive}.

The asymptotic behavior of \eqref{eq:sri} has been studied in \cite{benaim2005stochastic}. Under the assumption that the sequence ${x_n}$ remains bounded almost surely, and under standard conditions such as the upper semi-continuity of the set-valued map and the Robbins--Monro requirements on the step-size, the asymptotic behavior of the iterates can be inferred by examining the stability properties of the differential inclusion
\begin{equation}
\dot{x}(t) \in H(x(t)) + \Bar{B}(0,\epsilon).
\label{ODOS}
\end{equation}
 More over in the unbiased case $\epsilon = 0$, classical results show that if $x^\ast \in \mathbb{R}^d$ is an asymptotically stable equilibrium of \eqref{ODOS}, then $x_n \to x^\ast$ almost surely \cite{borkar2008stochastic,kushner2003stochastic}.

However, in many practical settings---particularly in zeroth-order optimization---one does not have direct access to a subgradient at each iteration. Instead, the subgradient must be approximated using noisy function evaluations. Such an approximation is generally biased, and the resulting recursion naturally takes the form of \eqref{eq:sri} (see \cite{prashanth2025gradient}). Non-zero bias also arises in several other stochastic approximation problems, especially when the noise across iterations is not independent and depends on the current iterate \cite{karimi2019non,wang2019multistep}, or in distributed learning scenarios where a subset of agents may behave adversarially, thereby introducing systematic bias \cite{alistarh2018byzantine,data2021byzantine}.  
The analysis in this paper is carried out from the standpoint of zeroth-order optimization, although the results extend to the broader classes of applications mentioned above.

We list the main contributions of this paper below (without loss of generality, we assume that $0 \in H(0)$).

\begin{itemize}

    \item \textbf{Almost Sure Convergence.}
    Our first contribution establishes that if the differential inclusion in \eqref{ODOS} is input-to-state stable and the iterates generated by \eqref{eq:sri} remain almost surely bounded, then the sequence $x_n$ converges to a neighborhood of $0$ almost surely (Theorem~\ref{thm:eps_convergence}). Earlier analyses of \eqref{eq:sri}---for example, \cite{bertsekas2000gradient,vidyasagar2024convergence,karandikar2024convergence,karandikar2025revisiting}---typically rely on the assumption that the bias term vanishes asymptotically. However, a diminishing bias can slow down learning in practice, especially in machine learning implementations (see Chapter~4.4 of \cite{haykin2009neural}). Our numerical simulations also reflect this behavior; see Fig ~\ref{fig: muvarwithPL} and~\ref{fig: muvarwithoutPL}. Moreover, such an assumption can be restrictive in many practical settings. In particular,
in stochastic zeroth-order optimization with biased oracles—common in reinforcement
learning and statistical learning—one typically encounters a nonzero and
non-diminishing bias \cite{bhavsar2022nonasymptotic}. Similarly, in distributed
learning systems, persistent bias may arise due to adversarial agents and is often
beyond the designer’s control, rendering diminishing-bias assumptions unrealistic
in many applications \cite{data2021byzantine}.

\item \textbf{Almost Sure Boundedness.}
A central assumption in stochastic approximation is the almost sure boundedness of the iterates \cite{metivier1984applications}, yet verifying this assumption is often nontrivial. In this paper, we provide a sufficient condition ensuring boundedness: when $H$ is single-valued and globally Lipschitz, the iterates generated by \eqref{eq:sri} remain almost surely bounded (Theorem~\ref{thm:boundedness}). Existing results establish almost sure boundedness under the assumption of a
nonzero but diminishing bias
\cite{vidyasagar2024convergence,karandikar2025revisiting,karandikar2024convergence}.   Boundedness results for gradient descent schemes under global Lipschitz continuity of the gradient have also been reported in \cite{ramaswamy2017analysis}; however, those works do not account for stochastic martingale difference noise, which limits their applicability in many practical settings.

In contrast, this work provides sufficient conditions, together with rigorous justification, under which the recursion \eqref{eq:sri} remains almost surely bounded even in the presence of a persistent bias.

\item \textbf{Stochastic Zeroth-Order Optimization.}
We also analyze stochastic gradient descent with a non-diminishing biased gradient oracle, motivated by zeroth-order optimization settings \cite{allen2018make}. We show that the resulting iterates can be written in the general stochastic recursion form~\eqref{eq:sri}. Under standard conditions, including the Polyak--\L{}ojasiewicz (PL) inequality, we establish that the iterates are almost surely bounded and that the associated dynamical systems for these stochastic gradient methods are input-to-state stable.

We then extend the analysis to convex, nonsmooth, and constrained optimization problems. In this case, when the subgradient oracle is biased, the iterates again admit the representation~\eqref{eq:sri}, with the set-valued map \(H(x)\) given by the Minkowski sum of the subdifferential of the objective function and the truncated normal cone of the constraint set (see~\eqref{PZSGDISS}). Assuming strong monotonicity of the subdifferential mapping, we show that the corresponding dynamical system is input-to-state stable (Proposition~\ref{ISS4}).

Overall, this section highlights the broad applicability of our main results and
demonstrates that input-to-state stability provides a unified and powerful
framework for analyzing the asymptotic behavior of zeroth-order stochastic
gradient methods across smooth, nonsmooth, and constrained settings.

\end{itemize}

\section{Notation and Math Preliminaries}

Let $\mathbb{I}_n$ denote the $n\times n$ identity matrix. The index set $\{1,2,\cdots,N\}$ will be compactly denoted as $[N]$. 
We denote the Euclidean norm as $\|x\|$ for $x\in\mathbb{R}^d$. The $\delta$-neighborhood of a set $C\subset \mathbb{R}^d$ is defined as $N_\delta(C)=\{x\in\mathbb{R}^d\;|\; \inf_{y\in C}\|x-y\|<\delta \}$. 
For $X,Y\subseteq \mathbb{R}^d$, $F: X\rightrightarrows Y$ denotes a set-valued map.
The gradient and Hessian of $f:\mathbb{R}^d\to\mathbb{R}$ will be represented by $\nabla f$ and $\nabla^2f$ respectively. For a convex function $f:\mathbb{R}^d\to\mathbb{R}$ the subdifferential of $f$ at $x\in\mathrm{dom}(f)$ is denoted by $\partial f(x)$. Let $x\sim\mathcal{N}(\mu,\sigma^2)$ denote a random variable which is normally distributed with mean $\mu$ and standard deviation $\sigma$. The expectation of a random variable $X$ will be denoted by $\mathbb{E}[X]$ The sigma algebra generated by the random vectors $X_1,X_2,\ldots, X_n$ are denoted by $\sigma(X_1,X_2,\ldots,X_n)$.

    The normal cone to a non-empty, closed, convex $C\subset\mathbb{R}^d$ at $x\in C$ is defined as $\mathcal{N}_C(x)=\{v\in\mathbb{R}^d\;|\;v^\top(y-x)\leq0,\;\forall\;y\in C\}$. Similarly, the tangent (contingent) cone to $C$ at $x$ is defined as $\mathcal{T}_C(x)=\{w\in\mathbb{R}^d\;|\;\exists\;t_i\downarrow0\; \mathrm{and}\;w_i\to w,\mathrm{such\;that}\;x+t_iw_i\in C\}$

\section{Asymptotic Convergence Analysis}
We first list down all the main  assumptions for this section.

\begin{assumption}
There exists a continuously differentiable function $V: \mathbb{R}^d \to \mathbb{R}$ satisfying $V(x) \geq 0$, $V(0) = 0$ and $\lim_{\norm{x}\to\infty} V(x) = \infty$, along with two class $\mathcal{K}_\infty$ functions $a(\cdot)$ and $b(\cdot)$ such that, for all $x \in \mathbb{R}^d$, $\nu \in H(x)$, and $b \in \bar{B}(0,\epsilon)$,
\begin{equation*}
\langle \nabla V(x), \nu + b \rangle \leq -a(\norm{x}) + b(\epsilon).
\end{equation*}
\label{ISS}
\end{assumption}

\begin{assumption}
The step-size sequence ${\alpha_n}$ satisfies
\begin{equation*}
\sum_{n=1}^{\infty} \alpha_n = \infty, \qquad
\sum_{n=1}^{\infty} \alpha_n^2 < \infty.
\end{equation*}
\label{step-size}
\end{assumption}

\begin{assumption}
The set-valued map $H: \mathbb{R}^d \rightrightarrows \mathbb{R}^d$ is Marchaud \cite{borkar2008stochastic}, that is,
\begin{enumerate}
\item for each $x \in \mathbb{R}^d$, $H(x)$ is nonempty, compact, and convex
\item there exists $\kappa > 0$ such that
\begin{equation*}
\sup_{\nu \in H(x)} \norm{\nu} \leq \kappa(1 + \norm{x}),
\end{equation*}
\item The graph of $H$ is closed.
\end{enumerate}
\label{Mar}
\end{assumption}

\begin{assumption}
Let ${\mathcal{F}_n}$ denote the natural filtration generated by ${x_k}$, that is,
\begin{equation*}
\mathcal{F}_n = \sigma({x_k : 1 \leq k \leq n}).
\end{equation*}
The noise sequence ${M_{n+1}}$ satisfies
\begin{equation*}
\mathbb{E}[M_{n+1} | \mathcal{F}_n] = 0, \qquad
\mathbb{E}[\norm{M_{n+1}}^2 | \mathcal{F}_n] \leq K,
\end{equation*}
for some constant $K > 0$.
\label{noise}
\end{assumption}
The primary objective of the first part of this paper is to demonstrate that the asymptotic behavior of the stochastic recursive inclusion \eqref{eq:sri} can be analyzed through the lens of Input-to-State Stability (ISS) of the associated differential inclusion
\begin{equation}
\dot{x}(t) \in H(x(t)) + \bar{B}(0,\epsilon).
\label{odi}
\end{equation}

Assumption~\ref{ISS} ensures that the perturbed system is input-to-state stable; that is, there exist functions $\beta \in \mathcal{KL}$ and $\gamma \in \mathcal{K}_\infty$ such that, for any initial condition $x(0)$ and admissible input $b(t) \in \bar{B}(0,\epsilon)$,
\begin{equation*}
\norm{x(t)} \leq \beta(\norm{x(0)}, t) + \gamma(\epsilon), \qquad \forall \;  t \geq 0.
\end{equation*}

Assumption~\ref{step-size} corresponds to the standard Robbins--Monro conditions on the step-size sequence, ensuring diminishing yet persistent updates. 
Assumption \ref{Mar} is a standard requirement in the stochastic approximation literature \cite{benaim2005stochastic,borkar2008stochastic}. This assumption further ensures that the associated set-valued map is upper semi-continuous (see \cite{aubin1993differential} for definition), which in turn guarantees the existence of a Carathéodory solution to the differential inclusion \eqref{odi}. 
Finally, Assumption~\ref{noise} characterizes the stochastic noise sequence ${M_{n}}$ as a martingale difference sequence with uniformly bounded second moments, which facilitates the connection between the discrete-time recursion and its continuous-time limiting dynamics.

Before proceeding with the asymptotic analysis of the iterates ${x_n}$ via the corresponding differential inclusion, we first introduce their continuous-time interpolated trajectory. Specifically, define $\Bar{X}(t)$ as
\begin{align}
\Bar{X}(t) = x_n + \big(x_{n+1} - x_n\big) \frac{t - t(n)}{t(n+1) - t(n)},\; \; t \in I_n,
\end{align}
%\textcolor{red}{Shouldn't we use a different letter instead of ``$t$'' in $\bar{X}(t)$? Like $\bar{X}(\tau)=...,\tau\in I_n$}
where $I_n := [t(n), t(n+1))$, and the time sequence ${t(n)}$ is given by $t(0) = 0$ and $t(n) = \sum_{k=0}^{n-1} \alpha_k$.
We next recall a standard result from stochastic approximation theory~\cite{doi:10.1137/S0363012904439301}, which plays a pivotal role in establishing the asymptotic convergence of the iterates.

\begin{proposition}
Suppose Assumptions \ref{ISS},~\ref{step-size}, \ref{Mar}, and \ref{noise} hold, and further assume that ${x_n}$ is bounded almost surely. Then, the continuous-time interpolation $\Bar{X}(t)$ of ${x_n}$ is an asymptotic pseudo-trajectory (APT) of the differential inclusion
\begin{equation}
\dot{x}(t) \in H(x) + \bar{B}(0,\epsilon).
\label{Perturbed}
\end{equation}
That is,
\begin{equation}
\lim_{t \to \infty}  \sup_{0 \leq s \leq T}
\norm{\Bar{X}(t+s) - x_t(s)} = 0,
\label{AST1}
\end{equation}
where $x_t(s)$ denotes the solution of~\eqref{Perturbed} with the initial condition $x_t(0) = \Bar{X}(t)$.
\label{Asympototic Pes}
\end{proposition}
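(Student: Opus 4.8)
The plan is to recast \eqref{eq:sri} as a \emph{perturbed solution} of \eqref{Perturbed} in the sense of the stochastic-approximation theory for differential inclusions, and then invoke the general characterization of asymptotic pseudo-trajectories from \cite{doi:10.1137/S0363012904439301}. First I would rewrite the recursion with a single selection: setting $G(x) := H(x) + \bar{B}(0,\epsilon)$, which has nonempty, compact, convex values with measurable graph, there is a measurable $g_n \in G(x_n)$ such that $x_{n+1} = x_n + \alpha_n g_n + \alpha_n M_{n+1}$. Here I would check that $G$ inherits the Marchaud property from $H$ (Assumption~\ref{Mar}): its values are Minkowski sums of a compact convex set with a ball, hence compact and convex; $\sup_{\nu \in G(x)} \norm{\nu} \le \kappa(1+\norm{x}) + \epsilon \le \kappa'(1+\norm{x})$ for a suitable $\kappa'$; and its graph is closed since $H$ has closed graph and translation by the fixed ball is continuous. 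Consequently \eqref{Perturbed} admits Carath\'eodory solutions from every initial condition, so the APT notion in the statement is well-posed.

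Next I would pass to a full-probability event $\Omega_0$ on which (i) $\sup_n \norm{x_n} =: C(\omega) < \infty$, which holds by hypothesis, and (ii) the martingale $\zeta_n := \sum_{k=0}^{n-1} \alpha_k M_{k+1}$ converges. For (ii), Assumption~\ref{noise} gives $\mathbb{E}\!\left[\norm{\zeta_{n+1}-\zeta_n}^2 \mid \mathcal{F}_n\right] = \alpha_n^2\,\mathbb{E}\!\left[\norm{M_{n+1}}^2 \mid \mathcal{F}_n\right] \le K\alpha_n^2$, hence $\sum_n \mathbb{E}\norm{\zeta_{n+1}-\zeta_n}^2 \le K \sum_n \alpha_n^2 < \infty$ by Assumption~\ref{step-size}, and an $L^2$-bounded martingale converges almost surely. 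On $\Omega_0$ the linear growth of $G$ together with (i) yields $\sup_n \norm{g_n} \le \kappa'(1 + C(\omega)) < \infty$.

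With this in hand I would verify the two defining properties of a perturbed solution for the interpolation $\bar{X}$: boundedness and equicontinuity on compact time windows, which follow from $\sup_n \norm{g_n} < \infty$ and $\sum_n \alpha_n^2 < \infty$; and asymptotic negligibility of the noise, namely $\lim_{t\to\infty} \sup_{0\le s\le T} \norm{\zeta_{m(t+s)} - \zeta_{m(t)}} = 0$ with $m(t) := \min\{n : t(n) \ge t\}$, which is immediate from (ii) since a convergent sequence is Cauchy and $m(t)\to\infty$. Hence $\bar{X}(\cdot,\omega)$ is a perturbed solution of \eqref{Perturbed} for every $\omega \in \Omega_0$, and the general result of \cite{doi:10.1137/S0363012904439301} then identifies it as an APT, i.e.\ \eqref{AST1} holds.

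The part I expect to require the most care is the deterministic core behind the last implication: showing that over an arbitrary window $[t,t+T]$ the interpolated curve stays uniformly close to an \emph{exact} solution of \eqref{Perturbed}. This is handled by an Arzel\`a--Ascoli argument applied to the time-shifted interpolations, extracting a limit curve that, using upper semicontinuity and convexity of $G$ together with its linear growth, is shown to solve the inclusion, followed by a Gronwall-type estimate for the residual error. Since this is precisely the content of the cited reference (and of Bena\"im--Hofbauer--Sorin), I would invoke it rather than reprove it; the only extra bookkeeping relative to the unbiased case $\epsilon = 0$ is the verification above that $G = H + \bar{B}(0,\epsilon)$ is Marchaud, which is routine.
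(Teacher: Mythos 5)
The paper offers no proof of this proposition; it simply recalls it as a standard result and cites \cite{doi:10.1137/S0363012904439301}. Your proposal takes exactly this route, and the added value --- verifying that $H+\bar{B}(0,\epsilon)$ is Marchaud, that the weighted noise sums vanish via $L^2$-bounded martingale convergence under Assumptions~\ref{step-size} and~\ref{noise}, and that the interpolation is a perturbed solution on the almost-sure boundedness event --- is a correct and complete check of the cited theorem's hypotheses.
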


In the ensuing Theorem, we prove that the iterates $x_n$ almost surely converges  to a neighborhood of $0$.

\begin{theorem}[Almost Sure Convergence]
\label{thm:eps_convergence}
Let Assumptions \ref{ISS},~\ref{step-size}, \ref{Mar}, and \ref{noise} hold, and further assume that the sequence $\{x_n\}$ generated by \eqref{eq:sri} is bounded almost surely.
Then, for any $\delta>0$, there exists $\epsilon_0>0$ such that for every $0<\epsilon\le\epsilon_0$ there exists $n_0\in\mathbb{N}$ with
\[
\|x_n\|\le\delta \qquad \forall\, n\ge n_0.
\]
\end{theorem}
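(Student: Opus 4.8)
\emph{Proof plan.} The strategy is to push the discrete recursion into continuous time via Proposition~\ref{Asympototic Pes} and then feed it through the ISS estimate that Assumption~\ref{ISS} guarantees for \eqref{odi}. Throughout I would work on the almost-sure event on which $\{x_n\}$ is bounded and $\bar X(\cdot)$ is an asymptotic pseudo-trajectory of \eqref{Perturbed}. A first bookkeeping step: since on each interval $I_n$ the interpolation $\bar X$ is a convex combination of $x_n$ and $x_{n+1}$, boundedness of the iterates forces $\sup_{t\ge 0}\|\bar X(t)\|<\infty$, and since $t(n)\to\infty$ (Assumption~\ref{step-size}) this yields
\begin{equation*}
\limsup_{t\to\infty}\|\bar X(t)\| \;=\; \limsup_{n\to\infty}\|x_n\| \;=:\; L \;<\;\infty .
\end{equation*}
The whole theorem then reduces to proving the self-referential bound $L\le\gamma(\epsilon)$.

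To get that bound, fix $\eta>0$ and $T>0$, and let $x_t(\cdot)$ be the solution of \eqref{Perturbed} with $x_t(0)=\bar X(t)$ appearing in the APT estimate \eqref{AST1}. The ISS property of \eqref{odi} gives $\|x_t(T)\|\le\beta(\|\bar X(t)\|,T)+\gamma(\epsilon)$, while \eqref{AST1} gives $\|\bar X(t+T)-x_t(T)\|\le\eta$ for all $t$ large, and the definition of $L$ gives $\|\bar X(t)\|\le L+\eta$ for all $t$ large. Combining these (and using monotonicity of $\beta$ in its first argument) yields, for all sufficiently large $t$,
\begin{equation*}
\|\bar X(t+T)\| \;\le\; \eta+\beta(L+\eta,T)+\gamma(\epsilon).
\end{equation*}
Taking $\limsup_{t\to\infty}$ gives $L\le\eta+\beta(L+\eta,T)+\gamma(\epsilon)$; letting $T\to\infty$ kills the $\beta$ term because $\beta(\cdot,T)\to0$, and then $\eta\to 0$ leaves $L\le\gamma(\epsilon)$. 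Finally, given $\delta>0$, pick $\epsilon_0>0$ with $\gamma(\epsilon_0)<\delta$ (possible since $\gamma\in\mathcal K_\infty$ is continuous with $\gamma(0)=0$); then for any $0<\epsilon\le\epsilon_0$, $\limsup_n\|x_n\|=L\le\gamma(\epsilon)\le\gamma(\epsilon_0)<\delta$, so $\|x_n\|\le\delta$ for all $n$ past some (path-dependent) $n_0$, which is exactly the claim.

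The argument is short once Proposition~\ref{Asympototic Pes} and the ISS estimate are available, so the points needing care are more subtle than hard. First, one application of the APT property is not enough: it must be invoked for arbitrarily large windows $T$, and the resulting bound must be fed back into $L$ itself, which is why one arrives at the self-referential inequality $L\le\beta(L,T)+\gamma(\epsilon)$ rather than a single estimate. Second, the order of limits matters --- for fixed $T,\eta$ the APT threshold may grow with $T$, so one must send $t\to\infty$ first, then $\eta\to0$, then $T\to\infty$. Third, the ISS bound has to apply to whichever Carathéodory solution the APT property selects on each window; this is fine precisely because the dissipation inequality in Assumption~\ref{ISS} delivers the estimate for \emph{every} solution of the set-valued dynamics \eqref{odi}. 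I regard verifying this last uniformity (a Sontag--Wang-type ISS-Lyapunov step from Assumption~\ref{ISS}) as the one place most likely to require genuine work, together with noting that although boundedness, the APT property, and hence $L$ and $n_0$ are sample-path dependent, the envelope $\gamma(\epsilon)$ and therefore $\epsilon_0$ are deterministic, matching the quantifier order in the statement.
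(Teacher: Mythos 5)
Your proof is correct and rests on the same two pillars as the paper's: Proposition~\ref{Asympototic Pes} to control $\|\bar X(t+T)-x_t(T)\|$, and the ISS estimate $\|x_t(T)\|\le\beta(\|\bar X(t)\|,T)+\gamma(\epsilon)$, combined by the triangle inequality. Where you genuinely diverge is in how the $\beta$-term is killed. The paper fixes $\delta$ up front, places the trajectory in a compact set $A$ (by almost-sure boundedness), and runs a finite-subcover argument to produce a single horizon $T$ with $\beta(\|\xi\|,T)\le\delta/3$ uniformly over $A$; one application of the triangle inequality at that horizon then gives $\|\bar X(t+T)\|\le\delta$. You instead set $L:=\limsup_t\|\bar X(t)\|$, derive the self-referential inequality $L\le\eta+\beta(L+\eta,T)+\gamma(\epsilon)$ for every $T,\eta>0$, and pass to the limits $T\to\infty$, $\eta\to 0$. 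This replaces the compactness/subcover step with monotonicity of $\beta$ in its first argument (which in fact also makes the paper's subcover dispensable), and it buys the sharper conclusion $\limsup_n\|x_n\|\le\gamma(\epsilon)$, from which the stated $\delta$--$\epsilon_0$ claim is immediate. The order-of-limits care you flag is genuine but handled correctly: the limsup in $t$ is taken for each fixed $(T,\eta)$, after which the remaining two limits commute. Both arguments lean on the same unproved assertion, namely that Assumption~\ref{ISS} yields the $\mathcal{KL}$/$\mathcal{K}_\infty$ bound for \emph{every} Carath\'eodory solution of \eqref{Perturbed} (in particular the one selected by the APT property); you explicitly identify this as the step requiring work, whereas the paper simply asserts it, so you are not worse off. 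Your bookkeeping that $\limsup_t\|\bar X(t)\|=\limsup_n\|x_n\|$ via convexity of the norm on each interpolation interval, and the observation that $\epsilon_0$ is deterministic while $n_0$ is sample-path dependent, are both correct and match the quantifier structure of the theorem.
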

\begin{proof}
We first show that there exists $T_0>0$ such that
\[
\|\Bar{X}(t)\|\le\delta \qquad \forall\, t\ge T_0,
\]
which, by the definition of the continuous-time interpolation $\Bar{X}(t)$, implies the desired bound for the discrete iterates $\{x_n\}$. Fix $\delta>0$ and choose a horizon $T>0$ (to be specified later). By Proposition~\ref{Asympototic Pes}, $\Bar{X}(t)$ is an asymptotic pseudo-trajectory of the perturbed differential inclusion~\eqref{Perturbed}. Hence, there exists $t_1=t_1(T,\delta)$ such that for all $t\ge t_1$,
\begin{equation}
\sup_{0\le s\le T}\,\|\Bar{X}(t+s)-x_t(s)\| \le \frac{\delta}{3}.
\label{apt}
\end{equation}

By the ISS assumption, there exist $\beta\in\mathcal{KL}$ and $\gamma\in\mathcal{K}_\infty$ such that every solution of~\eqref{Perturbed} satisfies
\[
\|x_t(s)\| \le \beta(\|\Bar{X}(t)\|,s)+\gamma(\epsilon) \qquad \forall\, s\ge 0.
\]
Because $\gamma\in\mathcal{K}_\infty$, there exists $\epsilon_0>0$ with
\[
0<\epsilon\le\epsilon_0 \;\Rightarrow\; \gamma(\epsilon)\le \frac{\delta}{3}.
\]
Since $\{x_n\}$ is almost surely bounded, the interpolation $\Bar{X}(t)$ takes values in some compact set $A\subset\mathbb{R}^d$ (possibly sample-path dependent). Fix one such sample path and the corresponding compact set $A$. We claim that there exists $T>0$ such that
\[
\beta(\|\xi\|,T)\le \frac{\delta}{3} \qquad \forall\, \xi\in A.
\]
Indeed, for each $\xi\in A$, the $\mathcal{KL}$ property gives $T_\xi>0$ with
\[
\beta(\|\xi\|,t)\le \frac{\delta}{6} \qquad \forall\, t\ge T_\xi.
\]
By continuity of $\beta(\cdot,t)$ in the first argument, there exists an open neighborhood $U_\xi$ of $\xi$ such that
\[
\beta(\|y\|,t)\le \frac{\delta}{6} \qquad \forall\, y\in U_\xi,\ \forall\, t\ge T_\xi.
\]
The family $\{U_\xi:\xi\in A\}$ covers $A$, hence admits a finite subcover $U_{\xi_1},\dots,U_{\xi_N}$. Define
\[
T = \max_{1\le i\le N} T_{\xi_i}.
\]
Then $\beta(\|\xi\|,t)\le \delta/3$ for all $\xi\in A$ and $t\ge T$. Now fix this $T$ and let $t\ge t_1$. Using the triangle inequality and in view of \eqref{apt}, we obtain
\begin{align}
\|\Bar{X}(t+T)\|
&\le \|\Bar{X}(t+T)-x_t(T)\| + \|x_t(T)\| \nonumber\\
&\le \frac{\delta}{3} + \underbrace{\beta(\|\Bar{X}(t)\|,T)}_{(a)} + \underbrace{\gamma(\epsilon)}_{(b)}. \label{eq:triangle}
\end{align}
By construction, $(a)\le \delta/3$ for all $\Bar{X}(t)\in A$, and $(b)\le \delta/3$ whenever $0<\epsilon\le\epsilon_0$. Therefore, from~\eqref{eq:triangle},
\[
\|\Bar{X}(t+T)\|\le \delta \qquad \forall\, t\ge t_1.
\]
Set $T_0 := t_1+T$. Then $\|\Bar{X}(t)\|\le \delta$ for all $t\ge T_0$. Finally, since $\Bar{X}(t)$ is a piecewise-linear interpolation of $\{x_n\}$ on each interval $[t(n),t(n+1))$, there exists $n_0\in\mathbb{N}$ such that $t(n_0)\ge T_0$, and hence
\[
\|x_n\|\le \delta \qquad \forall\, n\ge n_0.
\]
\end{proof}
\begin{remark}

%\textcolor{red}{
%In the above theorem, we have implicitly assumed that the origin is an equilibrium point of the differential inclusion~\eqref{Perturbed}. However, the same conclusion remains valid for any equilibrium point $x^\ast \in \mathbb{R}^d$ by considering the translated dynamics in terms of $x - x^\ast$. } \textcolor{red}{(Is this needed? is this not a trivial, as a result of coordinate change?)}

In the above Theorem, we assume that the sequence $\{x_n\}$ is bounded almost surely, which is a nontrivial requirement. Later, we show that this boundedness assumption holds naturally for a broad class of stochastic optimization problems. Nevertheless, if this condition does not hold, one standard approach to ensure boundedness is to project each iterate onto a compact, convex set. Such a projection preserves the asymptotic properties of the recursion while guaranteeing that $\{x_n\}$ remains bounded.
\end{remark}

In the next section, we provide sufficient conditions for the iterates $x_n$ to be almost surely bounded.

\section{Almost Sure Boundedness}

In this section, we consider the case where the set-valued map $H$ in~\eqref{eq:sri} reduces to a singleton and is globally Lipschitz continuous. In other words,
\begin{equation}
    x_{n+1} - x_n - \alpha_n M_{n+1} 
    \in \alpha_n \big(h(x_n) + \bar{B}(0,\epsilon) + M_{n+1}\big),
    \label{sriiss}
\end{equation}
where $h: \mathbb{R}^d \to \mathbb{R}^d$ is a single-valued mapping. We impose the following additional assumptions.

\begin{assumption}
\label{assump:Lip}
The map $h: \mathbb{R}^d \to \mathbb{R}^d$ is globally Lipschitz continuous; that is, there exists $L > 0$ such that for all $x, y \in \mathbb{R}^d$,
\[
\norm{h(x) - h(y)} \leq L \norm{x - y}.
\]
\end{assumption}

In addition, we strengthen Assumption~\ref{ISS} by requiring quadratic bounds on
the Lyapunov function. 

\begin{assumption}
\label{assump:V_equiv}
 For all $x \in \mathbb{R}^d$, there  constants $\underline{a}$ and $\overline{a} > 0$ such that
\[
\underline{a} \norm{x}^2 \leq V(x) \leq \overline{a} \norm{x}^2.
\]
\end{assumption}

Recall that, in view of Assumption~\ref{ISS}, we have
\[
\langle \nabla V(x),\, h(x) + b \rangle \leq -a(\norm{x}) + b(\epsilon),
\qquad \text{where } b \in \bar{B}(0, \epsilon).
\]
From the definition of a class~$\mathcal{K}_\infty$ function, we note that for any $\eta > 0$, there exists $R > 0$ such that for all $\norm{x} > R$,
\begin{equation}
    \langle \nabla V(x),\, h(x) + b \rangle \leq -\eta.
    \label{eta}
\end{equation}

Assumptions~\ref{assump:Lip} and~\ref{assump:V_equiv} are standard in the stochastic
approximation literature. In particular, under essentially the same assumptions,
almost sure boundedness of the recursion~\eqref{sriiss} has been established in
\cite{vidyasagar2024convergence,karandikar2024convergence} for the unbiased case,
that is, when the bias term is identically zero.

The proof of the almost sure boundedness result relies on the following
proposition, whose proof is provided in the Appendix \ref{appendix: prop2}.

\begin{proposition}
\label{prop:finite_horizon_bound}
Let $n \in \mathbb{N}$ be such that $\|x_n\| \le D$, and let $T > 0$ be arbitrary.  
Choose $m \in \mathbb{N}$ such that
\[
t(n+m) := \min \{\, k \in \mathbb{N} \mid t(n+k) \ge t(n) + T \,\}.
\]
Let $x(t)$ denote the solution of the differential inclusion
\begin{equation}
    \dot{x}(t) \in h(x(t)) + \bar{B}(0, \epsilon),
    \label{eq:ode}
\end{equation}
with initial condition $x(t(n)) = x_n$.  
Then, the continuous-time interpolation $\Bar{X}(t)$ of the iterates satisfies
\[
\sup_{0 \le i \le m} 
\|\Bar{X}(t(n+i)) - x(t(n+i))\|
\le K_{n,T}\, e^{L(T+1)},
\]
where
\begin{align*}
    K_{n,T} &:= 
L C_T \sum_{k=0}^{m-1} \alpha_{n+k}^2 
+ 2 \epsilon (T+1) 
+ \|\psi_{n,\, n+m}\|,\\
    \psi_{n,n+m} &:= \sum\limits_{k=0}^{m} \alpha_{n+k} M_{n+k+1},\\
    C_T &:= L \big( D + (T+1)\epsilon \big)\, e^{L(T+1)} + \epsilon.
\end{align*}
\end{proposition}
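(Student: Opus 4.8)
The plan is to compare the discrete interpolation $\Bar{X}(t)$ with the exact solution $x(t)$ of the differential inclusion \eqref{eq:ode} on the finite horizon $[t(n), t(n)+T]$, using a discrete Gr\"onwall argument. First I would write both trajectories in integral form at the grid points $t(n+i)$. For the continuous solution, $x(t(n+i)) = x_n + \int_{t(n)}^{t(n+i)} \big(h(x(s)) + b(s)\big)\,ds$ for some measurable selection $b(s)\in\bar B(0,\epsilon)$. For the interpolation, summing the recursion \eqref{sriiss} gives $\Bar{X}(t(n+i)) = x_n + \sum_{k=0}^{i-1}\alpha_{n+k}\big(h(x_{n+k}) + b_{n+k} + M_{n+k+1}\big)$ with $b_{n+k}\in\bar B(0,\epsilon)$. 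Subtracting, the error $e_i := \Bar{X}(t(n+i)) - x(t(n+i))$ splits into (i) a noise term $\psi_{n,n+i} = \sum_{k=0}^{i-1}\alpha_{n+k}M_{n+k+1}$, bounded by $\|\psi_{n,n+m}\|$; (ii) a bias-discretization term coming from the difference between $\sum_k \alpha_{n+k} b_{n+k}$ and $\int b(s)\,ds$, each of which has norm at most $\epsilon$ times the elapsed time, which is at most $T+1$ since one extra step overshoots $T$ — giving the $2\epsilon(T+1)$ contribution; and (iii) the main term $\sum_k \int_{t(n+k)}^{t(n+k+1)} \big(h(x_{n+k}) - h(x(s))\big)\,ds$.

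For term (iii) I would use Lipschitz continuity of $h$: $\|h(x_{n+k}) - h(x(s))\| \le L\|x_{n+k} - x(s)\| \le L\big(\|x_{n+k} - x(t(n+k))\| + \|x(t(n+k)) - x(s)\|\big)$. The first piece is $L\|e_k\|$ and contributes $L\sum_k \alpha_{n+k}\|e_k\|$ — the Gr\"onwall-type term. The second piece, $\|x(t(n+k)) - x(s)\|$ for $s$ in the $k$-th subinterval, is controlled by an a priori bound on $\|\dot x\|$: since $\|h(x(s))\| \le \|h(0)\| + L\|x(s)\|$ and $\|x(s)\|$ itself is bounded on $[t(n),t(n)+T]$ by a Gr\"onwall estimate starting from $\|x_n\|\le D$, one gets $\|x(s)\| \le (D + (T+1)\epsilon)e^{L(T+1)}$ and hence $\|\dot x(s)\| \le L(D+(T+1)\epsilon)e^{L(T+1)} + \epsilon =: C_T$ (absorbing $h(0)$ appropriately, or noting the bound is stated this way in the proposition). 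Thus the second piece over the $k$-th subinterval contributes at most $L C_T \alpha_{n+k}$ times the interval length $\alpha_{n+k}$, i.e.\ $L C_T \alpha_{n+k}^2$, summing to $L C_T \sum_k \alpha_{n+k}^2$. Collecting everything, $\|e_i\| \le K_{n,T} + L\sum_{k=0}^{i-1}\alpha_{n+k}\|e_k\|$ where $K_{n,T}$ is exactly the stated quantity.

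The final step is the discrete Gr\"onwall inequality: from $\|e_i\| \le K_{n,T} + L\sum_{k<i}\alpha_{n+k}\|e_k\|$ one concludes $\|e_i\| \le K_{n,T}\exp\big(L\sum_{k=0}^{i-1}\alpha_{n+k}\big) \le K_{n,T}\,e^{L(T+1)}$, using that the total accumulated step size over the $m$ steps is at most $T+1$ (the horizon $T$ plus at most one extra step, whose size can be bounded by $1$ for large $n$, or this is folded into the standing assumptions). Taking the supremum over $0\le i\le m$ gives the claim.

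The main obstacle I anticipate is the careful bookkeeping in term (ii) and in the horizon-length bound: one must be precise that $t(n+m) - t(n) \le T + \alpha_{n+m-1}$ and then justify why this extra step contributes at most $1$ (so that the elapsed time is $\le T+1$), which presumably uses $\alpha_n \to 0$ (a consequence of Assumption~\ref{step-size}) so that $\alpha_{n+m-1}\le 1$ for $n$ large; alternatively the proposition is implicitly stated for such $n$. The other delicate point is making the a priori bound on $\|x(s)\|$ rigorous — this itself needs a continuous Gr\"onwall argument on the differential inclusion, using that any measurable selection of $h(x(s))+\bar B(0,\epsilon)$ has norm at most $\|h(0)\| + L\|x(s)\| + \epsilon$. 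Everything else is routine triangle-inequality estimation and summation.
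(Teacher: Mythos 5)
Your proposal is correct and follows essentially the same route as the paper's proof: write both trajectories in integral/summed form at the grid points, split the error into a noise term, a $2\epsilon(T+1)$ bias term, a discretization term bounded via an a priori Gr\"onwall estimate on $\|x(s)\|$ (the paper's Lemma~\ref{Lemma1}, giving $C_T$), and a Lipschitz term in the accumulated error, then close with the discrete Gr\"onwall inequality. The only cosmetic difference is that the paper organizes the triangle-inequality split as $P_1$ (continuous solution vs.\ its value at the last grid point) plus $P_3$ (grid values of $x$ vs.\ $\Bar{X}$) rather than your direct comparison of $h(x_{n+k})$ with $h(x(s))$, and it silently uses $h(0)=0$ (assumed in the introduction) where you carry $\|h(0)\|$ explicitly.
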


We are now in a position to establish the almost sure boundedness of the iterates
generated by~\eqref{sriiss}. We begin by showing that there exists a compact set
$C \subset \mathbb{R}^d$ such that the sequence $\{x_n\}$ is recurrent to $C$; that
is, almost surely, the iterates $\{x_n\}$ visit $C$ infinitely often.

\begin{lemma}
\label{lem:recurrence_compact}
The sequence of iterates $\{x_n\}$ generated by~\eqref{sriiss} is almost surely recurrent to a compact set $C \subset \mathbb{R}^d$, that is,
\[
\mathbb{P}\big( x_n \in C \text{ infinitely often} \big) = 1.
\]
\end{lemma}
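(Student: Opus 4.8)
The plan is to argue by contradiction: suppose that on a set of positive probability the iterates $\{x_n\}$ eventually leave every compact set, i.e.\ $\|x_n\|\to\infty$ along that sample path. The key is to exploit the ISS-type dissipation inequality \eqref{eta}: outside a ball of radius $R$, the vector field $h(x)+b$ satisfies $\langle \nabla V(x),\,h(x)+b\rangle \le -\eta$, so along the limiting differential inclusion \eqref{eq:ode} the Lyapunov function $V$ strictly decreases at a uniform rate whenever $\|x(t)\|>R$. I would fix a level set of $V$, say $C := \{x : V(x) \le \ell\}$ for $\ell$ large enough that $C \supset \bar B(0,R)$ and $C$ has nonempty interior; this is the candidate compact recurrent set (compactness follows from radial unboundedness of $V$ in Assumption~\ref{ISS}, and the quadratic bounds of Assumption~\ref{assump:V_equiv} make all the estimates explicit).

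First I would quantify the continuous-time decrease: starting from any $x_n$ with $D_0 := \|x_n\|$ possibly large, as long as the solution $x(t)$ of \eqref{eq:ode} stays outside $\bar B(0,R)$ we have $\frac{d}{dt}V(x(t)) \le -\eta$, so within a time horizon $T = T(D_0)$ proportional to $V(x_n)/\eta \asymp \overline a \|x_n\|^2/\eta$ the solution must enter $C$ (in fact reach $V \le \ell/2$, say, to leave room for the approximation error). Next I would transfer this to the interpolated iterates using Proposition~\ref{prop:finite_horizon_bound}: over the block of indices covering $[t(n), t(n)+T]$ we have $\sup_i \|\bar X(t(n+i)) - x(t(n+i))\| \le K_{n,T} e^{L(T+1)}$, where $K_{n,T}$ splits into a deterministic $O(\sum \alpha_{n+k}^2)$ term, a $2\epsilon(T+1)$ term, and the martingale tail $\|\psi_{n,n+m}\|$. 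The deterministic step-size term is controllable by Assumption~\ref{step-size} (tails of $\sum \alpha_k^2$ vanish), and by the martingale convergence theorem $\psi_{n,n+m} \to 0$ almost surely as $n\to\infty$; the $2\epsilon(T+1)$ term is the irreducible bias contribution, which is why the target set $C$ must be taken a fixed positive amount larger than $\bar B(0,R)$ and cannot be shrunk to a point. Combining, for $n$ large enough on the given sample path, $x_{n+m} \in C$ — contradicting the assumption that $\{x_n\}$ eventually avoids $C$, hence proving infinitely-often recurrence.

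The main obstacle I anticipate is the \emph{$n$-dependence of the horizon $T$}: a single fixed $T$ does not suffice because $\|x_n\|$ may be unbounded, so $T = T(\|x_n\|)$ grows, and then the error bound $K_{n,T}e^{L(T+1)}$ contains the explosive factor $e^{L(T+1)}$ with $T$ itself large. The way around this is the standard ``scaled iterates'' or telescoping argument: rather than demanding one giant step into $C$, one shows that over each fixed-length window the value $V(x_n)$ contracts by a fixed multiplicative factor (the $e^{LT}$-type error is dominated by the contraction provided the window length and $n$ are chosen in the right order), so $V(x_n)$ cannot diverge. Equivalently, one runs Proposition~\ref{prop:finite_horizon_bound} with a fixed $T$ and iterates the one-step estimate, accumulating the (almost surely summable-tailed) perturbations; the contractivity from \eqref{eta} beats the perturbations for all large $n$, forcing $\{x_n\}$ back toward $C$. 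Making this bookkeeping precise — choosing the order of quantifiers between the sample path, $n$, and the window, and verifying the martingale and step-size tails are uniformly small over the relevant windows — is the technical heart of the argument.
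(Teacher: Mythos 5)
Your proposal takes a genuinely different---and substantially harder---route than the paper, and the step you yourself flag as ``the technical heart'' is exactly where it fails to close. The paper's proof never touches the continuous-time interpolation or Proposition~\ref{prop:finite_horizon_bound}. It writes the one-step descent inequality
\[
V(x_{n+1})\le V(x_n)+\langle\nabla V(x_n),x_{n+1}-x_n\rangle+\tfrac{L}{2}\|x_{n+1}-x_n\|^2,
\]
takes conditional expectations, and uses Assumption~\ref{ISS} together with the growth bounds to observe that whenever $\|x_n\|>R$ the drift term $\alpha_n\big(-a(\|x_n\|)+b(\epsilon)\big)$ is nonpositive, which yields
$\mathbb{E}[V(x_{n+1})\mid\mathcal{F}_n]\le(1+c_1\alpha_n^2)V(x_n)+c_2\alpha_n^2$.
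It then splits into two cases: either $\|x_n\|\le R$ infinitely often, in which case recurrence to $\bar B(0,R)$ is immediate; or $\|x_n\|>R$ for all large $n$, in which case the Robbins--Siegmund theorem gives almost sure convergence, hence boundedness, of $V(x_n)$ and recurrence to a sample-path-dependent sublevel set. No horizon $T$, no tracking of the differential inclusion, no contradiction argument.

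The gap in your approach is that the proposed repair of the $T$-dependence does not follow from the stated assumptions. First, a ``fixed multiplicative contraction of $V$ over a fixed window'' would require the decrease rate $-a(\|x\|)+b(\epsilon)$ to dominate a multiple of $V(x)\asymp\|x\|^2$; but $a$ is only assumed to be class $\mathcal{K}_\infty$, so \eqref{eta} delivers only an \emph{additive} decrease of order $\eta T$ per window. Second, when you iterate fixed windows from an initial point with $\|x_n\|$ large, the per-window tracking error in Proposition~\ref{prop:finite_horizon_bound} contains the term $2\epsilon(T+1)e^{L(T+1)}$, which does not vanish as $n\to\infty$, and converting a position error $\delta$ into an error in $V$ at a point of norm $\|x\|$ costs a factor of $\|\nabla V\|\asymp\|x\|$ by Assumption~\ref{assump:V_equiv}; so the perturbation to $V$ per window grows with $\|x_n\|$ while the guaranteed decrease stays at the constant $\eta T$, and the bookkeeping does not close as $\|x_n\|\to\infty$. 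The paper deliberately reserves the window-tracking machinery for Theorem~\ref{thm:boundedness}, where it is applied only \emph{locally} around a fixed compact set $C_2$ \emph{after} recurrence has been secured by the present lemma; your plan asks that machinery to also pull the iterates back from arbitrarily far away, which is precisely the regime it cannot handle. The discrete Robbins--Siegmund argument is the missing idea.
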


\begin{proof}
    Since \(V\) is continuously differentiable with Lipschitz continuous gradient, we have
\[
V(x_{n+1}) 
\le 
V(x_n) 
+ \langle \nabla V(x_n),\, x_{n+1} - x_n \rangle 
+ \frac{L}{2} \|x_{n+1} - x_n\|^2.
\]
Taking conditional expectation with respect to \(\mathcal{F}_n\) and using the recursive inclusion, we obtain
\begin{equation}
\begin{split}
\mathbb{E}[V(x_{n+1}) \mid \mathcal{F}_n]
&\le 
V(x_n) 
+ \alpha_n \langle \nabla V(x_n),\, h(x_n) + b \rangle \\
& + \frac{3L}{2}\, \alpha_n^2\, 
\mathbb{E}\big[\|h(x_n)\|^2 + \|b\|^2 + \|M_{n+1}\|^2 \,\big|\, \mathcal{F}_n\big].
\end{split}
\label{eq:V_expansion}
\end{equation}
In view of Assumption~\ref{ISS}, we have
\[
\langle \nabla V(x_n),\, h(x_n) + b \rangle 
\le -a(\|x_n\|) + b(\epsilon).
\]
Furthermore, since \(h(\cdot)\) is globally Lipschitz with constant \(L\), we obtain
\[
\|h(x_n)\|^2 \le L^2 \|x_n\|^2 
\le \frac{L }{2 \Bar{a}}\, V(x_n),
\]
Substituting these bounds into~\eqref{eq:V_expansion} and letting $K_1 = K + \epsilon^2$ gives
\begin{equation}
\begin{split}
\mathbb{E}[V(x_{n+1}) \mid \mathcal{F}_n]
&\le 
\big(1 + \tfrac{3L^2 }{\Bar{a}}\, \alpha_n^2 \big) V(x_n) 
\\ & + \alpha_n \big(-a(\|x_n\|) + b(\epsilon)\big)
+ \tfrac{3L}{2}\, \alpha_n^2 K_1.
\end{split}
\label{eq:V_final}
\end{equation}
 Consider $\|x_n\| > R$ such that $-a(\|x_n\|) + b(\epsilon) < 0.$ Hence, whenever $\|x_n\| > R$, we have
\begin{equation}
\begin{split}
\mathbb{E}[V(x_{n+1}) \mid \mathcal{F}_n]
&\le 
\Big(1 + \tfrac{3L^2 a}{2}\, \alpha_n^2 \Big) V(x_n)
+ \tfrac{3L}{2}\, \alpha_n^2 K_1.
\end{split}
\label{eq:V_outside_R}
\end{equation}

Now consider two cases.

\smallskip
\noindent\textbf{Case 1.}  
Suppose there exists $n_0 \in \mathbb{N}$ such that 
$\|x_n\| > R$ for all $n \ge n_0$.  
Then inequality~\eqref{eq:V_outside_R} holds for all $n \ge n_0$, and the sequence $\{V(x_n)\}$ satisfies
\[
\mathbb{E}[V(x_{n+1}) \mid \mathcal{F}_n]
\le (1 + c_1 \alpha_n^2) V(x_n) + c_2 \alpha_n^2,
\]
for some constants $c_1, c_2 > 0$.  
By the Robbins–Siegmund theorem~\cite{ROBBINS1971233}, it follows that $\{V(x_n)\}$ converges almost surely and is therefore bounded.  
Consequently, there exists $R_1 > 0$ (possibly sample-path dependent) such that
\[
V(x_n) \le R_1, \qquad \forall\, n.
\]

\smallskip
\noindent\textbf{Case 2.}   
If the above condition does not hold, then there exist infinitely many indices $n$ such that $\|x_n\| \le R$.  
In this case, the compact set
\[
C := \{\, x \in \mathbb{R}^d \mid \|x\| \le R \,\}
\]
is recurrent; that is, the sequence $\{x_n\}$ returns to $C$ infinitely often almost surely.
 
Combining the two cases, we obtain that the sequence $\{x_n\}$ is almost surely recurrent to a compact set (either $\{\|x\| \le R_1\}$ or $\{\|x\| \le R\}$, depending on the trajectory).

\end{proof}

\begin{theorem}
\label{thm:boundedness}
Suppose Assumptions~\ref{ISS}, \ref{step-size}, \ref{Mar},   \ref{noise} and \ref{assump:Lip} hold.  
Then the sequence of iterates $\{x_n\}$ generated by the stochastic recursive inclusion \eqref{sriiss}
is almost surely bounded, that is,
\[
\sup_{n \ge 0} \|x_n\| < \infty \quad \text{a.s.}
\]
\end{theorem}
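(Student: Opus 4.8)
The plan is to combine Lemma~\ref{lem:recurrence_compact} with the finite-horizon tracking estimate of Proposition~\ref{prop:finite_horizon_bound} and the ISS property of the limiting differential inclusion~\eqref{eq:ode}. Lemma~\ref{lem:recurrence_compact} already reduces matters to the case in which the iterates return infinitely often to the compact set $C=\{\|x\|\le R\}$; in the other case boundedness is immediate. So I would fix a sample path on which recurrence to $C$ holds and on which $\sum_k \alpha_k^2<\infty$, $\mathbb{E}[\|M_{n+1}\|^2\mid\mathcal{F}_n]\le K$ are in force (a.s.\ event). The goal is to show the excursions of $\{x_n\}$ away from $C$ cannot grow unboundedly.

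The key steps, in order: First, invoke the ISS bound $\|x(s)\|\le\beta(\|x(t(n))\|,s)+\gamma(\epsilon)$ for solutions of~\eqref{eq:ode}, and pick a horizon $T>0$ large enough that $\beta(\rho,T)\le 1$ for all $\rho\le R+1$ (possible since $\beta\in\mathcal{KL}$ and $[0,R+1]$ is compact), so that any true solution started from a point of norm $\le R+1$ lies in the ball of radius $B^\ast:=R+1+\gamma(\epsilon)$ after time $T$, and, using monotonicity of $\beta(\cdot,0)$, stays in a slightly larger fixed ball, say radius $B^\ast$, over $[0,T]$ as well (enlarge $B^\ast$ to $\beta(R+1,0)+\gamma(\epsilon)$ if needed). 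Second, control the discrepancy term $K_{n,T}e^{L(T+1)}$ from Proposition~\ref{prop:finite_horizon_bound}: the martingale term $\|\psi_{n,n+m}\|$ with $\psi_{n,n+m}=\sum_{k=0}^m\alpha_{n+k}M_{n+k+1}$ is the tail of an $L^2$-bounded martingale (by Assumption~\ref{noise} and $\sum\alpha_k^2<\infty$), hence converges a.s., so $\sup_n\|\psi_{n,n+m}\|<\infty$ and in fact $\|\psi_{n,n+m}\|\to 0$; likewise $\sum_{k=0}^{m-1}\alpha_{n+k}^2\to 0$; the term $2\epsilon(T+1)$ is a fixed constant. Thus for $n$ large, $K_{n,T}e^{L(T+1)}\le \Delta$ for some $\Delta$ that can be taken as small as $1$, say, once $n\ge n_1$. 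Third, run the following bootstrap/induction over blocks of length $T$: suppose at some time $t(n)$ with $n\ge n_1$ we have $\|x_n\|\le R+1$. By Proposition~\ref{prop:finite_horizon_bound} the interpolated iterates over $[t(n),t(n)+T]$ stay within $B^\ast+\Delta$ of the true solution, which itself is bounded by $B^\ast$, so $\|x_{n+i}\|\le B^\ast+\Delta=:B$ for $0\le i\le m$, and moreover at the block end $\|x_{n+m}\|\le \beta(R+1,T)+\gamma(\epsilon)+\Delta\le 1+\gamma(\epsilon)+\Delta$. Enlarging notation: choosing $R$ at least as large as $\gamma(\epsilon)+\Delta$ (we may always take $R$ larger in Lemma~\ref{lem:recurrence_compact} since a larger ball is still recurrent), we get $\|x_{n+m}\|\le R+1$ again, closing the induction; hence once the iterate enters $\{\|x\|\le R+1\}$ after time $n_1$ it is trapped in the fixed ball $\{\|x\|\le B\}$ forever. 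Fourth, use recurrence: almost surely $x_n\in C\subset\{\|x\|\le R+1\}$ for infinitely many $n$, in particular for some $n\ge n_1$, and from that point on $\sup_{n}\|x_n\|\le B<\infty$. Combined with Case~1 of Lemma~\ref{lem:recurrence_compact}, this gives $\sup_{n\ge 0}\|x_n\|<\infty$ a.s.

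The main obstacle is the third step — setting up the block induction so that the constants close consistently. One must choose, in the right order, the horizon $T$ (from $\beta\in\mathcal{KL}$ on the compact norm-range $[0,R+1]$), then a threshold $n_1$ so that the Proposition~\ref{prop:finite_horizon_bound} error $K_{n,T}e^{L(T+1)}$ is below a prescribed $\Delta$ uniformly for all block-starts $n\ge n_1$ (this uses that both $\sum_{k\ge n}\alpha_k^2\to 0$ and the martingale tail $\psi_{n,n+m}\to 0$ a.s., the latter being the delicate probabilistic input), and finally ensure the recurrence set $C$ from Lemma~\ref{lem:recurrence_compact} can be taken large enough that re-entry lands inside $\{\|x\|\le R+1\}$; a subtlety is that $m$ depends on $n$ (number of steps to accumulate step-size mass $T$), but $m<\infty$ by Assumption~\ref{step-size} and the per-block bounds are uniform in $n\ge n_1$, so the induction is legitimate. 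A secondary technical point is that Proposition~\ref{prop:finite_horizon_bound} only bounds the iterates at the grid points $t(n+i)$; to bound $\sup_n\|x_n\|$ this is exactly what is needed, and to bound $\|\Bar{X}(t)\|$ between grid points one additionally uses that the interpolation is a convex combination of consecutive iterates, hence no larger in norm than their maximum.
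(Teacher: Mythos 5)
Your overall strategy is the same as the paper's: establish recurrence to a compact set (Lemma~\ref{lem:recurrence_compact}), use the finite-horizon tracking estimate of Proposition~\ref{prop:finite_horizon_bound} together with the contraction of true solutions of~\eqref{eq:ode} over a horizon $T$, and run a block induction showing that once the iterates re-enter the recurrent set at a late enough time they are trapped in a fixed ball. The only cosmetic difference is that you work with the $\mathcal{KL}$-form ISS estimate on norm balls, whereas the paper works with two Lyapunov sublevel sets $C_1\subset C_2$ and the pointwise decrease rate $\eta$ from~\eqref{eta}; these are interchangeable here.

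There is, however, one place where your constants do not close. The tracking error in Proposition~\ref{prop:finite_horizon_bound} contains the persistent term $2\epsilon(T+1)e^{L(T+1)}$, which does not vanish as $n\to\infty$ and grows exponentially in $T$. You assert that $\Delta$ ``can be taken as small as $1$'' and, when that is not enough, propose to enlarge $R$ so that $R\ge\gamma(\epsilon)+\Delta$. But $T$ is itself chosen as a function of $R$ (you need $\beta(\rho,T)\le 1$ for all $\rho\le R+1$), so enlarging $R$ forces $T$, and hence $\Delta\gtrsim 2\epsilon(T+1)e^{L(T+1)}$, to grow as well; for a fixed $\epsilon>0$ the fixed-point condition $R\ge\gamma(\epsilon)+\Delta(T(R))$ need not be satisfiable (e.g.\ with $\beta(r,t)=re^{-t}$ one gets $T(R)\sim\log R$ and $\Delta(T(R))\sim\epsilon R^{L}\log R$, which outpaces $R$ when $L\ge1$). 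The paper avoids this circularity by fixing the order of choices differently: first $R_1$ and the horizon $T$ (equivalently the gap $\delta$ between $C_1$ and $C_2$), and only then imposing the smallness condition $2\epsilon(T+1)e^{L(T+1)}\le\delta/2$ on $\epsilon$. To repair your argument you would need to add the analogous explicit smallness requirement on $\epsilon$ after $T$ is fixed (or otherwise justify that the required $R$ exists); as written, the induction step ``$\|x_{n+m}\|\le R+1$ again'' is not established for an arbitrary fixed bias level $\epsilon$. The remaining ingredients (the a.s.\ control of $\psi_{n,n+m}$ via Lemma~\ref{lemma:noise_limit}, the grid-point versus interpolation remark, and the use of recurrence to find a block start $n\ge n_1$) are all correct and match the paper.
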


\begin{proof}
Assume, by way of contradiction, that there exists a sample path along which the sequence $\{x_n\}$ is unbounded, and fix such a sample path.  

Let $R_1 > 0$ be sufficiently large, and consider the two compact sets $C_1 := \{\, x \in \mathbb{R}^d \mid V(x) \le R_1 \,\},\;
C_2 := \{\, x \in \mathbb{R}^d \mid V(x) \le R_1 + T\eta \,\}$
where $\eta$ is chosen according to~\eqref{eta}. We further select $R_1$ large enough so that the set $C_2$	
  is recurrent. The existence of such an 
$R_1 > 0$ follows directly from Lemma~\ref{lem:recurrence_compact}.

Select $\delta > 0$ such that $N_\delta(C_1) \subset C_2$, which is always possible by increasing $T$. Next, choose $\epsilon > 0$ satisfying
\[
2 \epsilon (T+1) e^{L(T+1)} \le \frac{\delta}{2}.
\]
Let $D$ denote the diameter of the set $C_2$, and choose a sufficiently large index $n_0$ such that for all $n \ge n_0$,
\[
\big(L C_T \sum_{k=0}^{m-1} \alpha_{n+k}^2 
+ 2 \epsilon (T+1) 
+ \|\psi_{n,\, n+m}\|\big) e^{L(T+1)} \le \delta,
\]
where $C_T$ and $\psi_{n,n+m}$ are defined in Proposition~\ref{prop:finite_horizon_bound}.  
Such a choice of $n_0$ exists due to Assumption~\ref{step-size} and Lemma~\ref{lemma:noise_limit}.  

From Lemma~\ref{lem:recurrence_compact}, if $\{x_n\}$ is unbounded along this sample path, then it must be recurrent to the set $C_2$.  
Hence, there exists $n \ge n_0$ such that $x_n \in C_2$.  

Now, let $m \in \mathbb{N}$ be chosen such that
\[
t(n+m) := \min \{\, k \in \mathbb{N} \mid t(n+k) \ge t(n) + T \,\}.
\]
In view of Proposition~\ref{prop:finite_horizon_bound}, we have
\[
\sup_{0 \le i \le m} 
\|\Bar{X}(t(n+i)) - x(t(n+i))\| \le \delta,
\]
where $x(t(\cdot))$ denotes the solution of the differential inclusion~\eqref{eq:ode} with initial condition $x(t(n)) = \Bar{X}(t(n))$.

By construction of $C_2$ and from~\eqref{eta}, it follows that $x(t) \in C_1$ for all $t \ge t(n)$.  
Furthermore, by the definition of $T$ and $t(n+m)$, we have $x(t(n+m)) \in C_1$.  
Hence, for all $0 \le i < m$, we have
\[
\Bar{X}(t(n+i)) \in N_\delta(C_2),
\qquad
\text{and} \qquad
\Bar{X}(t(n+m)) \in N_\delta(C_1) \subset C_2.
\]

Therefore, $\Bar{X}(t(n+m)) \in C_2$.  
Repeating this argument iteratively shows that $\Bar{X}(t) \in N_\delta(C_2)$ for all $t \ge t(n+m)$.  
This contradicts the assumption that $\{x_n\}$ is unbounded.  
Hence, the sequence $\{x_n\}$ is almost surely bounded.
\end{proof}

\section{Leveraging Theorem \ref{thm:eps_convergence} for Tackling Non-Convex Unconstrained Challenges}

In this section, we study the stochastic optimization problem
\[
    \min_{x \in \mathbb{R}^d} f(x)
    :=
    \mathbb{E}[F(x,\zeta)],
\]
where $\zeta$ is a random variable defined on the probability space
$(\Omega,\mathcal{F},\mathbb{P})$ and $F : \mathbb{R}^d \times \Omega \to \mathbb{R}$ is smooth for every realization $\zeta$.
We consider stochastic gradient descent with a biased gradient oracle.
Specifically, at each iterate $x_n$, the oracle returns a stochastic gradient
$\widetilde{\nabla} f(x_n)$ whose bias and variance are controlled as
\begin{equation}
    \Big\|
    \mathbb{E}\big[\widetilde{\nabla} f(x_n)\mid\mathcal{F}_n\big]
    -
    \nabla f(x_n)
    \Big\|
    \le
    \frac{b_1}{\lambda}
    +
    b_2 \lambda,
    \label{bias}
\end{equation}
and
\begin{equation}
    \mathbb{E}\!\left[
    \big\|\widetilde{\nabla} f(x_n)-\nabla f(x_n)\big\|^2
    \,\middle|\,
    \mathcal{F}_n
    \right]
    \le
    \frac{b_3}{\lambda^2},
    \label{variance}
\end{equation}
where $b_1,b_2,b_3>0$ and $\lambda>0$ is a tunable parameter.
Such biased oracles naturally arise in zeroth-order optimization, where gradients
are approximated using noisy function evaluations, please see
\cite{prashanth2025gradient,10143924,paul2025convergence}.
In this section, we impose the following additional assumptions.

\begin{assumption}
\label{ass:L}
The gradient of $f$ is $L$-Lipschitz continuous, that is,
\[
    \|\nabla f(x) - \nabla f(y)\|
    \le
    L \|x - y\|,
    \qquad
    \forall\, x,y \in \mathbb{R}^d.
\]
\end{assumption}

\begin{assumption}
\label{ass:PL}
The function $f$ satisfies the Polyak--Łojasiewicz (PL) inequality: there exists
$\mu > 0$ such that
\[
    \frac{1}{2}\|\nabla f(x)\|^2
    \ge
    \mu \big(f(x) - f^\ast\big),
    \qquad
    \forall\, x \in \mathbb{R}^d,
\]
where $f^\ast := \inf_x f(x)$.
\end{assumption}

\begin{assumption}
\label{ass:QUB}
For all $x\in\mathbb{R}^d,\;\exists\;r_1, r_2 > 0$ such that
\[
    r_1 \|x - x^\ast\|^2
    \le
    f(x) - f(x^\ast)
    \le
    r_2 \|x - x^\ast\|^2,
    \qquad
    \forall\, x \in \mathbb{R}^d,
\]
where $x^\ast$ denotes the unique minimizer of $f$.
\end{assumption}

Assumptions~\ref{ass:L}--\ref{ass:QUB} are standard in the analysis of nonconvex
stochastic gradient methods; please see
\cite{vidyasagar2024convergence,karandikar2024convergence} for further discussion.

%Throughout this section, we impose Assumptions~\ref{ass:L}--\ref{ass:QUB}, which are
%standard in the analysis of nonconvex stochastic gradient methods
%\cite{vidyasagar2024convergence,karandikar2024convergence}.
%In particular, Assumption~\ref{ass:L} implies that $f$ is $L$-smooth and satisfies
%\[
 %   f(y)
   % \le
  %  f(x)
    %+
    %\langle \nabla f(x), y-x \rangle
    %+
    %\frac{L}{2}\|y-x\|^2,
    %\qquad \forall\, x,y \in \mathbb{R}^d.
%\]
%Choosing $y = x - \frac{1}{L}\nabla f(x)$ yields
%\begin{equation}
   % \frac{1}{2}\|\nabla f(x)\|^2
    %\le
    %L\big(f(x) - f^\ast\big).
    %\label{L-smoothness}
%\end{equation}

The stochastic gradient descent update with the biased oracle is given by
\begin{equation}
    x_{n+1}
    =
    x_n
    -
    \alpha_n \widetilde{\nabla} f(x_n).
    \label{SGD-ZO}
\end{equation}
Using \eqref{bias}--\eqref{variance}, the recursion \eqref{SGD-ZO} can be expressed
in the perturbed form
\[
    x_{n+1} - x_n - \alpha_n M_{n+1}
    \;\in\;
    \alpha_n\big(-\nabla f(x_n) + \bar{B}(0,\epsilon)\big),
\]
where $\{M_{n+1}\}$ is a martingale difference sequence and
\[
    \epsilon := \frac{b_1}{\lambda} + b_2 \lambda.
\]

The perturbation satisfies Assumptions~\ref{ISS}, \ref{step-size}, \ref{Mar},  and
\ref{noise} required by Theorem~\ref{thm:eps_convergence}.
Consequently, whenever the iterates $\{x_n\}$ are almost surely bounded, the
asymptotic behavior of \eqref{SGD-ZO} is governed by the perturbed gradient flow
\begin{equation}
    \dot x(t)
    \in
    -\nabla f\big(x(t)\big) + \bar{B}(0,\epsilon).
    \label{ISISI}
\end{equation}
By Proposition~1 in \cite{sontag2022remarks}, the differential inclusion
\eqref{ISISI} is input-to-state stable. Combining this with the almost sure
boundedness of the iterates (Theorem~\ref{thm:boundedness}) implies that
$\{x_n\}$ converges almost surely to a neighborhood of the minimizer $x^\ast$,
whose radius is controlled by $\epsilon$.

\section{Leveraging Theorem \ref{thm:eps_convergence} to Address Convex Constrained Optimization Challenges}

Assumption~\ref{ass:L} can be restrictive for several convex optimization problems.
Accordingly, in this section we do not rely on Lipschitz continuity of the gradient.
Instead, we work under the standard convexity framework.

Specifically, we consider the stochastic optimization problem
\begin{equation}
      \min_{x \in \mathcal{X}} f(x) := \mathbb{E}[F(x,\zeta)],
      \label{PSC}
\end{equation}
where, for every realization $\zeta$, the function $F(\cdot,\zeta)$ is convex in $x$,
and the feasible set $\mathcal{X} \subset \mathbb{R}^d$ is compact.

Under this setup, we assume access to a biased stochastic subgradient oracle.
At each iterate $x_n$, the oracle returns a stochastic subgradient
$\widetilde{g}(n)$ satisfying
\begin{equation}
    \mathbb{E}[\widetilde{g}(n)\mid \mathcal{F}_n]
    =
    g(n) + \mathrm{B}(n),
    \qquad
    g(n) \in \partial f(x_n),
    \label{Rumpaiusu}
\end{equation}
where the bias term $\mathrm{B}(n)$ is uniformly bounded as
\[
    \|\mathrm{B}(n)\|
    \le
    \frac{b_1}{\lambda} + b_2 \lambda.
\]
Moreover, the second moment of the stochastic subgradient satisfies
\begin{equation}
    \mathbb{E}\!\left[\|\widetilde{g}(n)\|^2 \mid \mathcal{F}_n\right]
    \le
    \frac{b_3}{\lambda^2},
    \label{variance-cvx}
\end{equation}
where $b_1, b_2, b_3 > 0$ are constants and $\lambda > 0$ is a controllable parameter.

Such biased stochastic subgradients naturally arise in zeroth-order stochastic
optimization, where subgradients are approximated using noisy function evaluations;
see \cite{10143924} for explicit constructions and expressions for the constants
$b_1$, $b_2$, and $b_3$.

To solve \eqref{PSC}, we employ the projected stochastic subgradient method,
where the true subgradient is replaced by the biased stochastic subgradient
$\widetilde{g}(n)$ defined in \eqref{Rumpaiusu}–\eqref{variance-cvx}.
Let $x_n$ denote the iterate at time $n$. The update rule is given by
\begin{equation}
    x_{n+1}
    =
    \mathcal{P}_{\mathcal{X}}
    \big(x_n - \alpha_n\, \widetilde{g}(n)\big),
    \label{PZOSGD-ZO1}
\end{equation}
where $\mathcal{P}_{\mathcal{X}}(\cdot)$ denotes the Euclidean projection onto
the feasible set $\mathcal{X}$.

In this section, we assume that the subdifferential mapping of $f$ is strongly
monotone, as stated below.

\begin{assumption}
\label{assum: monotonicity}
There exists a constant $M>0$ such that, for all $x,y \in \mathcal{X}$ and for all
$g(x)\in \partial f(x)$, $g(y)\in \partial f(y)$,
\[
    \langle g(x)-g(y),\, x-y \rangle
    \ge
    M \|x-y\|^2.
\]
\end{assumption}

In addition, we impose the following boundedness assumption on the stochastic
subgradients.

\begin{assumption}
\label{ass:bounded-subgrad}
There exists a constant $G>0$ such that
\[
    \|\widetilde{g}(n)\| \le G,
    \qquad \forall\, n.
\]
\end{assumption}

Assumption~\ref{ass:bounded-subgrad} is non-restrictive in the context of zeroth-order
optimization. When stochastic subgradients are obtained by randomly perturbing
the current iterate, the compactness of $\mathcal{X}$ implies that the objective
function is globally Lipschitz over $\mathcal{X}$, and hence the corresponding
subgradients are uniformly bounded. We refer the reader to
\cite{paul2025convergence} for a detailed discussion.

\begin{comment}
\begin{lemma}
For the zeroth-order gradient estimator $\widetilde{g}(n)$, we have
\[
\mathbb{E}[\widetilde{g}(n)\mid \mathcal{F}_n] = g(n) + \mathrm{B}(n); \quad g(n) \in \partial f(x_n)
\]
where the bias term satisfies
\[
\|\mathrm{B}(n)\| \leq L \lambda \sqrt{n} + K\, \frac{\mathcal{B}\sqrt{n}}{\lambda},
\]
for some constant $K > 0$. Moreover,
\[
\mathbb{E}\!\left[\|\widetilde{g}(n)\|^2 \mid \mathcal{F}_n\right]
    \leq K\, \frac{K_1^2}{\mu^2}\, n.
\]
\end{lemma}
\end{comment}

To express the iterate update in \eqref{PZOSGD-ZO1} as a stochastic recursive inclusion of the form in \eqref{eq:sri}, we invoke Proposition~5.3.5 from \cite{hiriart2013convex}. 
\begin{comment}
\begin{proposition}
\label{Prop:Proposition3}
For any convex set $\mathcal{X}$ and any $v \in \mathbb{R}^d$, we have
\[
\lim_{t \downarrow 0} \frac{\mathcal{P}_{\mathcal{X}}(x + t v) - x}{t}
    = \mathcal{P}_{\mathcal{T}_{\mathcal{X}}(x)}(v),
\]
where $\mathcal{T}_{\mathcal{X}}(x)$ denotes the tangent cone of the convex set $\mathcal{X}$ at the point $x$.
\end{proposition}
\end{comment}
Applying Proposition 5.3.5, we can rewrite the update rule in \eqref{PZOSGD-ZO1} as  
\[
x_{n+1}
= x_n + \alpha_n\, \mathcal{P}_{\mathcal{T}_{\mathcal{X}}(x_n)}\big(-\widetilde{g}(n)\big)
= x_n + \alpha_n\big(-\widetilde{g}(n) - \eta_n\big),
\]
where $\eta_n \in \mathcal{N}_{\mathcal{X}}(x_n)$.  
The final equality follows from Moreau’s Decomposition Theorem (Theorem 3.2.5 in \cite{hiriart2013convex}).  
From the same theorem, we obtain
\[
\eta_n = \mathcal{P}_{\mathcal{N}_{\mathcal{X}}(x_n)}\big(-\widetilde{g}(n)\big).
\]

Since $0 \in \mathcal{N}_{\mathcal{X}}(x_n)$, it follows that  
\[
\|\eta_n + \widetilde{g}(n)\| \leq \|\widetilde{g}(n)\|.
\]
Hence,
\[
\|\eta_n\|
\leq \|\eta_n + \widetilde{g}(n)\| + \|\widetilde{g}(n)\|
\leq 2\|\widetilde{g}(n)\|
\leq 2G,
\]
where the final bound uses Assumption \ref{ass:bounded-subgrad}. Thus, the recursion \eqref{PZOSGD-ZO1} can be equivalently expressed as the stochastic recursive inclusion
\begin{equation}
x_{n+1} - x_n - \alpha_n M_{n+1}
\in \alpha_n\big( \partial f(x_n) + \bar{B}(0,\epsilon) + \widehat{\mathcal{N}}_{\mathcal{X}}(x_n)\big),
\label{PZSGDISS}
\end{equation}
where
\[
\widehat{\mathcal{N}}_{\mathcal{X}}(x_n)
= \{\nu \in \mathcal{N}_{\mathcal{X}}(x_n) : \|\nu\| \leq G\}.
\]

It is straightforward to verify that this stochastic inclusion satisfies Assumption~\ref{Mar}.  
Therefore, by Theorem~\ref{thm:eps_convergence}, the asymptotic behavior of $\{x_n\}$ is characterized by the differential inclusion
\begin{equation}
\dot{x}(t)
\in -\big(\partial f(x) + \widehat{\mathcal{N}}_{\mathcal{X}}(x) + \bar{B}(0,\epsilon)\big),
\label{ODI}
\end{equation}
where $\epsilon \leq \frac{b_1}{\lambda} + b_2 \lambda$. The differential inclusion above, particularly the case $\epsilon = 0$ has been extensively studied in the literature; see, for instance, \cite{dupuis1993dynamical} for a detailed analysis of well-posedness, viability and stability.  
In this section, however, our focus is on establishing input-to-state stability (ISS) for the case of nonzero $\epsilon$.

\begin{proposition}
The dynamical system described by \eqref{ODI} is input-to-state stable (ISS).
\label{ISS4}
\end{proposition}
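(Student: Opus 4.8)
\emph{Proof strategy.} The plan is to produce a quadratic ISS--Lyapunov function for \eqref{ODI} and verify the dissipation inequality of Assumption~\ref{ISS}, from which input--to--state stability follows by the standard Lyapunov characterization (exactly the tool already invoked for \eqref{ISISI} through \cite{sontag2022remarks}). First I would observe that strong monotonicity (Assumption~\ref{assum: monotonicity}) makes $f$ strongly convex on $\mathcal{X}$, so \eqref{PSC} has a unique minimizer $x^\ast\in\mathcal{X}$ --- which, under the normalization $0\in H(0)$, coincides with the origin --- and that the first--order optimality condition for \eqref{PSC} supplies some $g^\ast\in\partial f(x^\ast)$ with $-g^\ast\in\mathcal{N}_{\mathcal{X}}(x^\ast)$. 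The candidate I would use is $V(x)=\tfrac12\|x-x^\ast\|^2$, which trivially satisfies the quadratic sandwich bounds and radial unboundedness required of an ISS--Lyapunov function, so only the decay estimate along the inclusion remains.

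Next I would fix $x\in\mathcal{X}$ and an arbitrary element $\xi=-(g+\nu+b)$ of the right--hand side of \eqref{ODI}, with $g\in\partial f(x)$, $\nu\in\widehat{\mathcal{N}}_{\mathcal{X}}(x)$ and $\|b\|\le\epsilon$, and expand $\langle\nabla V(x),\xi\rangle=-\langle x-x^\ast,g\rangle-\langle x-x^\ast,\nu\rangle-\langle x-x^\ast,b\rangle$. The first term I would handle by adding and subtracting $g^\ast$: strong monotonicity bounds $\langle x-x^\ast,\,g-g^\ast\rangle$ below by $M\|x-x^\ast\|^2$, while the optimality inclusion $-g^\ast\in\mathcal{N}_{\mathcal{X}}(x^\ast)$, tested against $y=x\in\mathcal{X}$, gives $\langle g^\ast,x-x^\ast\rangle\ge 0$, so together $-\langle x-x^\ast,g\rangle\le -M\|x-x^\ast\|^2$. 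The normal--cone term I would dispatch using $\widehat{\mathcal{N}}_{\mathcal{X}}(x)\subseteq\mathcal{N}_{\mathcal{X}}(x)$ and the definition of the normal cone tested against $y=x^\ast\in\mathcal{X}$, which yields $\langle\nu,x-x^\ast\rangle\ge 0$, hence $-\langle x-x^\ast,\nu\rangle\le 0$. The perturbation term is bounded by $\epsilon\|x-x^\ast\|$ via Cauchy--Schwarz. Combining and applying Young's inequality, I would arrive at
\[
\langle\nabla V(x),\xi\rangle\le -M\|x-x^\ast\|^2+\epsilon\|x-x^\ast\|\le -\tfrac{M}{2}\|x-x^\ast\|^2+\tfrac{\epsilon^2}{2M},
\]
which is precisely the inequality of Assumption~\ref{ISS} with $a(r)=\tfrac{M}{2}r^2\in\mathcal{K}_\infty$ and $b(\epsilon)=\tfrac{\epsilon^2}{2M}$, a class $\mathcal{K}_\infty$ function of $\epsilon$.

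Finally, I would invoke the ISS--Lyapunov theorem for differential inclusions: since the right--hand side of \eqref{ODI} satisfies Assumption~\ref{Mar} (upper semicontinuous with nonempty compact convex values), Carathéodory solutions exist and, being confined to the compact set $\mathcal{X}$, are complete, so the existence of the Lyapunov function $V$ above yields the trajectory estimate $\|x(t)-x^\ast\|\le\beta(\|x(0)-x^\ast\|,t)+\gamma(\epsilon)$ for suitable $\beta\in\mathcal{KL}$, $\gamma\in\mathcal{K}_\infty$, i.e. \eqref{ODI} is ISS (cf. \cite{dupuis1993dynamical,sontag2022remarks}). The main obstacle --- really the only non--mechanical step --- is the sign bookkeeping in the constrained case: one must notice that the truncation $\widehat{\mathcal{N}}_{\mathcal{X}}$ is a subset of the full normal cone so the one--sided inequality survives, and that pairing the subdifferential at $x^\ast$ with the normal--cone optimality condition there supplies exactly the cancellation that turns strong monotonicity into a clean negative--definite term; once this is set up, the remaining computation is routine.
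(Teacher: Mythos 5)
Your proposal is correct and follows essentially the same route as the paper: the quadratic Lyapunov function $V(x)=\tfrac12\|x-x^\ast\|^2$, insertion of $g^\ast$ via the optimality inclusion $0\in\partial f(x^\ast)+\mathcal{N}_{\mathcal{X}}(x^\ast)$, strong monotonicity for the negative-definite term, nonnegativity of the normal-cone pairing, and Young's inequality on the perturbation. The only differences are cosmetic (you fix the Young parameter as $1/M$ where the paper keeps a free $\alpha$ with $M>\tfrac{1}{2\alpha}$, and you state explicitly the sign argument for the truncated normal-cone term that the paper drops implicitly).
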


\begin{proof}
Consider the Lyapunov function
\[
V(x) = \frac{1}{2}\|x - x^\ast\|^2,
\]
where $x^\ast$ is the unique solution of \eqref{PSC}. 
Since $x^\ast$ is the global minimizer, the optimality condition gives
\begin{equation}
0 \in \partial f(x^\ast) + \mathcal{N}_{\mathcal{X}}(x^\ast).
\label{opt}
\end{equation}

Let $x(t)$ be any Carathéodory solution of \eqref{ODI} with initial condition $x_0 \in \mathcal{X}$.  
The time derivative of $V$ along the trajectory satisfies
\[
\dot{V}(t)
= \langle \nabla V(x(t)), -g(t) - \eta(t) + b(t)\rangle,
\]
where $g(t) \in \partial f(x(t))$, $\eta(t) \in \widehat{\mathcal{N}}_{\mathcal{X}}(x(t))$, and $b(t) \in \bar{B}(0,\epsilon)$.  
Thus,
{\small
\[
\begin{aligned}
& \langle x(t) - x^\ast, -g(t) - \eta(t) + b(t)\rangle
\\ &\overset{(a)}{\le}
\langle x(t) - x^\ast, -g(t) + g(x^\ast)\rangle
    - \langle x(t) - x^\ast, \eta(t)\rangle
    + \langle b(t), x(t)-x^\ast\rangle \\
&\overset{(b)}{\le}
- M\|x(t)-x^\ast\|^2
 + \frac{1}{2\alpha}\|x(t)-x^\ast\|^2
 + \frac{\alpha}{2}\|b(t)\|^2 \\
&\le
-(M - \tfrac{1}{2\alpha})\|x(t)-x^\ast\|^2
    + \frac{\alpha}{2}\|b(t)\|^2.
\end{aligned}
\]
}
Inequality (a) follows from \eqref{opt}, which ensures  
$\langle g(x^\ast), x - x^\ast\rangle \ge 0$ for all $g(x^\ast)\in \partial f(x^\ast)$.  
Inequality (b) follows from Assumption~\ref{assum: monotonicity} and the Young–Fenchel inequality.

By choosing $\alpha$ such that $M > \frac{1}{2\alpha}$, the coefficient of $\|x(t)-x^\ast\|^2$ becomes negative.  
Hence, the system satisfies the ISS Lyapunov condition, and therefore \eqref{ODI} is input-to-state stable.
\end{proof}

\section{Simulation Results}
In this section, we validate the results derived for the stochastic gradient descent algorithm \eqref{SGD-ZO}. Consider the functions $f_1,f_2:\mathbb{R}^2\to\mathbb{R}$, where $f_1(x)=x_1^2+x_2^2+\sin(x_2)$ and $f_2(x)=x_1^4-x_1^2+x_2^2+\sin(x_2)$. We can see that $f_1$ satisfies Assumptions \ref{ass:L}-\ref{ass:QUB} while $f_2$ does not. Furthermore, $f_1^*(0,-0.4503)=-0.231$ and $f_1^*(\pm\frac{1}{\sqrt{2}},-0.453)=-0.481$. The step-size $\alpha_n$ for the ZO-SGD \eqref{SGD-ZO} is set to $0.01/n^{0.6}$ and the algorithm is run using various values for $\lambda\in\{0.0005,0.05,0.1,1\}$. The initial condition is set to $x_0=(1,1)$. We assume that the zeroth-order oracle is noisy with $e(x_n+\lambda u_n)\in \mathcal{N}(5,1)$ and $e(x_n-\lambda u_n)\in \mathcal{N}(1,1)$. The gradient for both functions $f_1$ and $f_2$ are approximated using the symmetric finite-difference estimator
\[
\Tilde{\nabla} f(x_n)
=
\frac{\Hat{f}(x_n+\lambda u_n)-\Hat{f}(x_n-\lambda u_n)}{2\lambda} u_n,
\quad
\Hat{f}(x)=f(x)+e(x).
\]
A Taylor series argument shows that this estimator satisfies the biased-oracle assumptions~\eqref{bias}–\eqref{variance}. The numerical results are shown in Fig.~\ref{fig: muvarwithPL} and Fig.~\ref{fig: muvarwithoutPL}.

%We compare the convergence of the zeroth-order stochastic gradient descent algorithm \eqref{SGD-ZO}, for different values of $\lambda$ when the zeroth-order oracle is biased. We provide two examples involving non-convex functions where one of them satisfies Assumptions \ref{ass:L}-\ref{ass:QUB}, while the other does not. 

\begin{figure}
         \centering
            \includegraphics[scale=0.35]{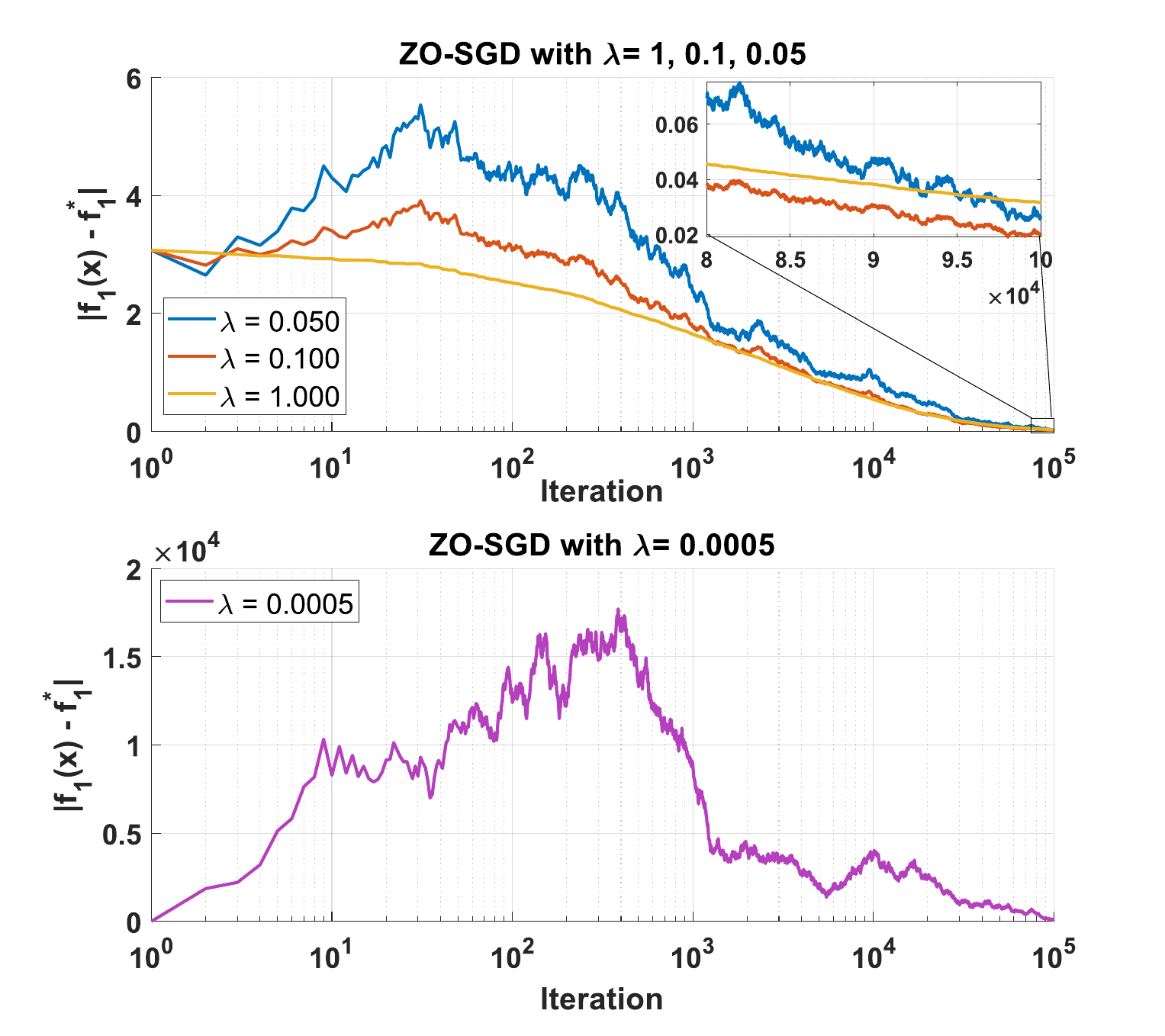}
         \caption{The top plot depicts $|f_1(x_n)-f_1^*|$, for $\lambda=0.05,\;0.1,\;1$ and the bottom plot depicts the same  for $\lambda=0.0005$. Moreover, $|f_1(x_{100000})-f_1^*|=73.4935, 0.0263, 0.0201  \;\mathrm{and}\;0.0316$ for $\lambda=0.0005,0.05,0.1\;\mathrm{and}\;1$, respectively.} 
         \label{fig: muvarwithPL}
     \end{figure}

\begin{figure}
         \centering
            \includegraphics[scale=0.35]{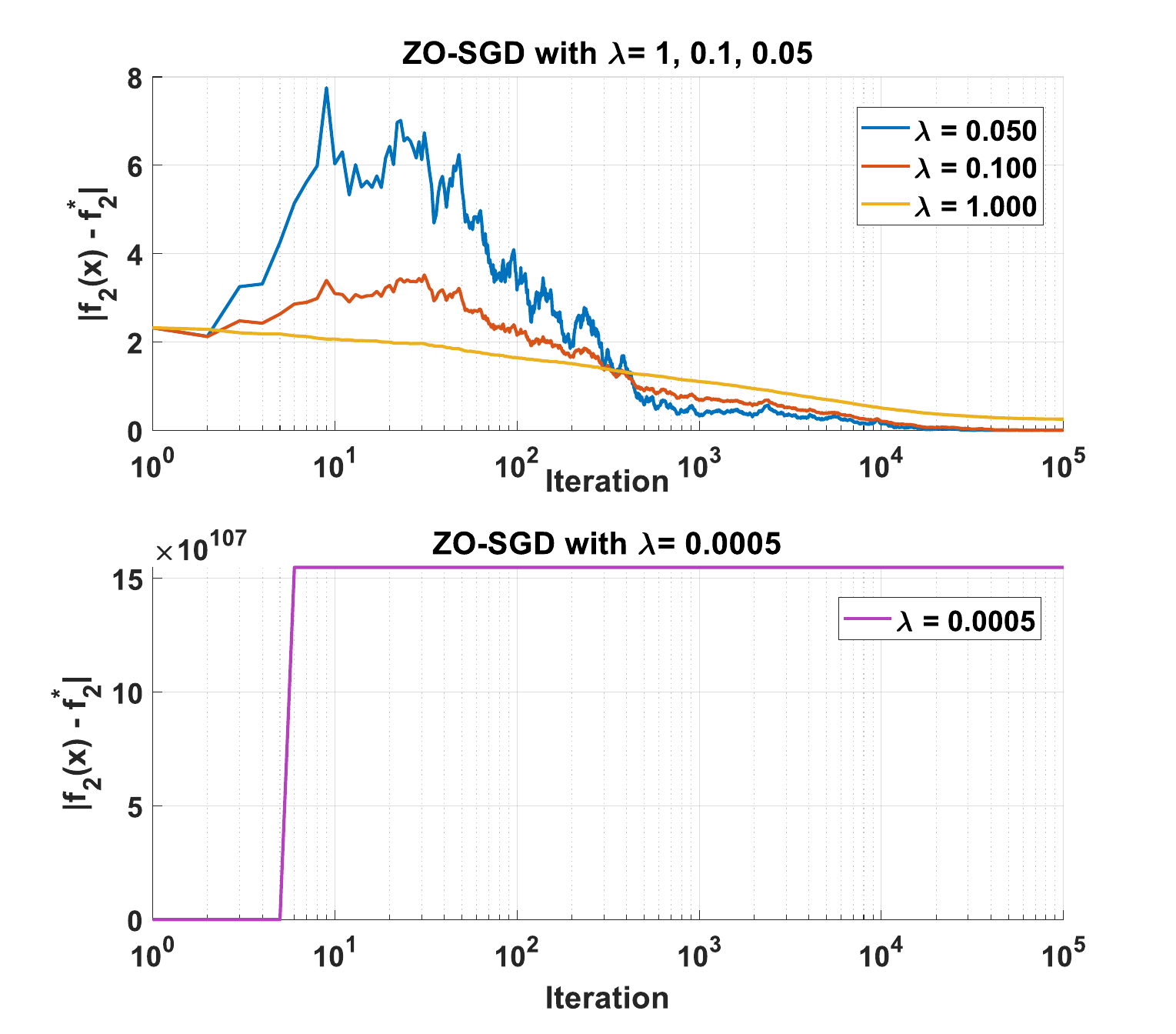}
         \caption{The top plot depicts $|f_2(x_n)-f_2^*|$, for $\lambda=0.05,\;0.1,\;1$ and the bottom plot depicts the same for $\lambda=0.0005$. Moreover, $|f_2(x_{100000})-f_2^*|=0.0064,\; 0.0068,\;    0.2545$ for $\lambda=0.05,\;0.1\;\mathrm{and}\;1$, respectively.} 
         \label{fig: muvarwithoutPL}
     \end{figure} 

We note that for $f_1(x)$,  meets all the assumptions given in this paper whereas these assumptions are not met for $f_2(x)$. 
From~\eqref{bias}, we observe that there exists a critical value
\(\lambda^\ast = \sqrt{b_1/b_2}\).
As \(\lambda\) is decreased toward \(\lambda^\ast\), the size of the
\(\delta\)-neighborhood to which the iterates \(x_n\) converge decreases,
in accordance with Theorem~\ref{thm:eps_convergence}.
However, reducing \(\lambda\) below \(\lambda^\ast\) increases the bias term,
leading to a larger limiting \(\delta\)-neighborhood.
Consequently, \(\lambda\) cannot be chosen arbitrarily small.

An explicit expression for the optimal value \(\lambda^\ast\) can be obtained
once the constants \(b_1\) and \(b_2\) are known. These constants can be derived from the
properties of the objective function, such as \(L\)-smoothness, and on the
statistical properties of the noise terms \(e(x_n \pm \lambda u_n)\); see~\cite{prashanth2025gradient}
for an exact expression.

Moreover, as indicated by~\eqref{variance}, the variance increases as
\(\lambda\) decreases. As a result, very small values of \(\lambda\) lead to
large oscillations of the iterates---see, for instance,
\(\lambda = 0.0005\) in Figs.~\ref{fig: muvarwithPL}
and~\ref{fig: muvarwithoutPL}---and consequently slower convergence to the
limiting neighborhood.

\section{Conclusion}

In this paper, we establish easily verifiable conditions for the almost sure boundedness and asymptotic convergence of stochastic recursive inclusions in the presence of a nonzero, non-diminishing bias. We also present representative examples that satisfy these conditions. Several directions for future research remain open, including non-asymptotic analysis of the iterates generated by \eqref{eq:sri}, relaxing the Lipschitz continuity assumptions while still guaranteeing almost sure boundedness, and a more detailed study of the role of the smoothing parameter 
$\lambda$ in stochastic zeroth-order optimization.
\appendices
\section{Proof of Proposition \ref{prop:finite_horizon_bound}} \label{appendix: prop2}

The proof of Proposition~\ref{prop:finite_horizon_bound} follows the same general strategy as Lemma~4.4 in~\cite{doi:10.1137/S0363012993253534} and the arguments presented in Chapter~3 of~\cite{borkar2008stochastic}, with appropriate modifications to account for the presence of an additional bias term. We begin by establishing the following two auxiliary lemmas.

\begin{lemma}
Let $x(t)$ be a Carathéodory solution of the differential inclusion
\[
\dot{x}(t) \in h(x(t)) + \bar{B}(0,\epsilon),
\]
with initial condition $x(0) = x_0$.
Then, for any $T>0$ and any $z \le T+1$, the following bound holds:
\[
\|x(z)\| 
\le \big( \|x_0\| + (T+1)\epsilon \big)\, e^{L(T+1)}.
\]
\label{Lemma1}
\end{lemma}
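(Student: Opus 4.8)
\textbf{Proof plan for Lemma~\ref{Lemma1}.}
The statement is a routine Gr\"onwall-type estimate applied to a Carath\'eodory solution of the differential inclusion $\dot{x}(t)\in h(x(t))+\bar{B}(0,\epsilon)$. The plan is to integrate the inclusion, bound the right-hand side using the global Lipschitz property of $h$ together with a bound on $h(0)$, and then invoke Gr\"onwall's inequality.

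First I would write, for a Carath\'eodory solution, $x(z)=x_0+\int_0^z \dot{x}(s)\,ds$ with $\dot{x}(s)=h(x(s))+b(s)$ for a.e.\ $s$, where $\|b(s)\|\le\epsilon$. Taking norms and using the triangle inequality,
\[
\|x(z)\|\le \|x_0\| + \int_0^z \|h(x(s))\|\,ds + \int_0^z \|b(s)\|\,ds
\le \|x_0\| + \int_0^z \|h(x(s))\|\,ds + z\epsilon.
\]
Next I would control $\|h(x(s))\|$. By the Lipschitz assumption (Assumption~\ref{assump:Lip}), $\|h(x(s))\|\le \|h(0)\| + L\|x(s)\|$; one then either absorbs $\|h(0)\|$ into the constants or, as the stated bound suggests, uses a slightly cruder estimate. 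The cleanest route consistent with the claimed inequality is to note that, since we may assume $0\in H(0)$ and here $H$ is single-valued, $h(0)=0$, so $\|h(x(s))\|\le L\|x(s)\|$. Substituting,
\[
\|x(z)\|\le \|x_0\| + z\epsilon + L\int_0^z \|x(s)\|\,ds
\le \|x_0\| + (T+1)\epsilon + L\int_0^z \|x(s)\|\,ds,
\]
using $z\le T+1$.

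Finally I would apply the integral form of Gr\"onwall's inequality with the constant term $C:=\|x_0\|+(T+1)\epsilon$ and the rate $L$, giving $\|x(z)\|\le C\,e^{Lz}\le C\,e^{L(T+1)}$, which is exactly the claimed bound. The only mild subtlety — and the one place deserving care rather than a genuine obstacle — is justifying that the Carath\'eodory solution is absolutely continuous so that the fundamental theorem of calculus and the a.e.\ differential relation may be used; this is built into the definition of a Carath\'eodory solution and is guaranteed by Assumption~\ref{Mar} (upper semicontinuity and linear growth), so no extra work is needed. There is no real difficulty here; the lemma is a standard a priori bound that feeds into the subsequent finite-horizon comparison estimate in Proposition~\ref{prop:finite_horizon_bound}.
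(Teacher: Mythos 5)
Your proposal is correct and matches the paper's intended argument: the paper's proof is a one-line appeal to Gr\"onwall's inequality (the "discrete-time" qualifier there appears to be a slip, since the integral form is what is needed), and your integrate--triangle-inequality--Gr\"onwall derivation, including the observation that $h(0)=0$ under the paper's standing assumption $0\in H(0)$, is exactly the computation the authors have in mind.
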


\begin{proof} The result follows by a straightforward application of the discrete-time Gronwall inequality.

\begin{comment}
Let $x(t)$ be a Carathéodory solution of the differential inclusion.  
Then, for any $z \ge 0$,
\[
x(z) = x_0 + \int_{0}^{z} \big( h(x(s)) + b(s) \big)\, ds,
\]
where $b(s) \in \bar{B}(0,\epsilon)$ for all $s$.  
Taking norms and applying the triangle inequality, we obtain
\begin{equation*}
\begin{split}
    \|x(z)\| 
    &\le \|x_0\| 
        + \int_{0}^{z} \|h(x(s))\|\, ds 
        + \int_{0}^{z} \|b(s)\|\, ds \\
    &\le \|x_0\| + L \int_{0}^{z} \|x(s)\|\, ds + \epsilon (T+1),
\end{split}
\end{equation*}
where the last term follows from $\|b(s)\| \le \epsilon$ and $z \le T+1$.  
By applying Grönwall’s inequality on $[0, z]$, we obtain
\[
\|x(z)\| \le \big( \|x_0\| + \epsilon (T+1) \big) e^{L(T+1)},
\]
which proves the desired result.
\end{comment}
\end{proof}

\begin{lemma}
\label{lemma:noise_limit}
Under Assumption~\ref{noise}, we have
\[
\lim_{n \to \infty} \; \sup_{k \ge n} 
\left\| \sum_{m = n}^{k} \alpha_m M_{m+1} \right\| = 0,
\quad \text{almost surely.}
\]
\end{lemma}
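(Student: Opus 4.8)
The plan is to recognize $S_n := \sum_{m=0}^{n-1} \alpha_m M_{m+1}$ as a zero-mean martingale with respect to the filtration $\{\mathcal{F}_n\}$ and to show it converges almost surely; the stated tail bound is then an immediate consequence of the Cauchy criterion for the convergent sequence $\{S_n\}$. First I would verify the martingale property: since $\mathbb{E}[M_{m+1}\mid\mathcal{F}_m]=0$ by Assumption~\ref{noise} and $\alpha_m$ is deterministic (hence $\mathcal{F}_m$-measurable), we get $\mathbb{E}[S_{n+1}\mid\mathcal{F}_n]=S_n$. Next I would bound the sum of conditional variances: using the conditional second-moment bound $\mathbb{E}[\|M_{m+1}\|^2\mid\mathcal{F}_m]\le K$ together with the orthogonality of martingale increments,
\[
\sup_n \mathbb{E}\big[\|S_n\|^2\big] = \sum_{m=0}^{\infty} \alpha_m^2\, \mathbb{E}\big[\|M_{m+1}\|^2\big] \le K \sum_{m=0}^{\infty} \alpha_m^2 < \infty,
\]
where finiteness is exactly Assumption~\ref{step-size}. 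Thus $\{S_n\}$ is an $L^2$-bounded martingale.

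By the martingale convergence theorem, $S_n$ converges almost surely (and in $L^2$) to some limit $S_\infty$. On the almost sure event where this convergence holds, $\{S_n\}$ is a Cauchy sequence in $\mathbb{R}^d$, so for every $\varepsilon>0$ there exists $N$ such that $\|S_k - S_n\| < \varepsilon$ for all $k \ge n \ge N$. Since
\[
\sum_{m=n}^{k} \alpha_m M_{m+1} = S_{k+1} - S_n,
\]
this is precisely the statement that $\sup_{k\ge n}\big\|\sum_{m=n}^{k}\alpha_m M_{m+1}\big\| \to 0$ as $n\to\infty$ almost surely, which is the claim.

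The argument is essentially routine once the martingale structure is identified, so there is no genuine obstacle; the only point requiring a little care is the bookkeeping on indices (the partial sum $\sum_{m=n}^{k}\alpha_m M_{m+1}$ equals $S_{k+1}-S_n$, not $S_k - S_n$) and the observation that almost sure convergence of a sequence in $\mathbb{R}^d$ is equivalent to it being almost surely Cauchy, which immediately yields the uniform-over-tail formulation stated in the lemma. One could alternatively invoke a maximal inequality (Doob's $L^2$ maximal inequality applied to the tail martingale $S_{n+j}-S_n$, $j\ge 0$) to get a quantitative bound on $\mathbb{P}(\sup_{k\ge n}\|S_{k+1}-S_n\| > \varepsilon)$ and then pass to the limit, but the martingale convergence route is cleaner.
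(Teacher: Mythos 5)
Your proposal is correct and follows essentially the same route as the paper: identify $S_n=\sum_m \alpha_m M_{m+1}$ as an $L^2$-bounded martingale via Assumptions~\ref{step-size} and~\ref{noise}, invoke the martingale convergence theorem, and deduce the vanishing of the tail suprema (the paper phrases this last step via the tail sums $T_n=\sum_{k\ge n}\alpha_k M_{k+1}$ rather than the Cauchy criterion, but the two are equivalent). No gaps.
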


\begin{proof}
    Define  $  S_n = \sum\limits_{k=1}^{n} \alpha(k) M_{k+1}.$
    
 The sequence ${S_n},n\geq 0,$ is a martingale that is bounded in $L^2$,
    \begin{equation*}
        \mathbb{E}[S_n^2] = \sum\limits_{k=1}^{n} \mathbb{E} \alpha(k)^2 [\norm{M_{k+1}}^2] \leq K \sum\limits_{k=1}^{\infty} \alpha(k)^2.
    \end{equation*}
   By the martingale convergence theorem, $\{S_n\}$ converges almost surely. Consequently, its tail sequence 
    \begin{equation*}
        T_n = \sum\limits_{k=n}^{\infty} \alpha(k) M_{k+1}, n\geq 0,
    \end{equation*}
    also converges to zero almost surely. Therefore, for any \(k \ge n\),
\begin{equation*}
    \sum_{m=n}^{k} \alpha_m M_{m+1}
    = T_n - \sum_{m=k+1}^{\infty} \alpha_m M_{m+1}.
\end{equation*}
Taking norms and applying the definition of \(T_m\), we obtain
\begin{equation*}
    \sup_{k \ge n} \Big\|\sum_{m=n}^{k} \alpha_m M_{m+1} \Big\|
    \le \|T_n\| + \sup_{m \ge n} \|T_m\|.
\end{equation*}
Finally, letting \(n \to \infty\) and noting that \(T_n \to 0\) almost surely yields
\begin{equation*}
    \lim_{n \to \infty} \sup_{k \ge n} \Big\|\sum_{m=n}^{k} \alpha_m M_{m+1} \Big\| = 0.
\end{equation*}
\end{proof}

We are now in a position to prove the Proposition \ref{prop:finite_horizon_bound}.

\begin{proof}[Proof of Proposition \ref{prop:finite_horizon_bound}]

From the stochastic recursive inclusion in~\eqref{sriiss} and the definition of the continuous-time interpolated trajectory, we obtain
\begin{align}
    \Bar{X}(t(n+m))
    &= \Bar{X}(t(n)) \nonumber\\
    &+ \sum_{k=0}^{m-1} \alpha_{n+k} 
    \big( \Bar{H}(\Bar{X}(t(n+k))) + M_{n+k+1} \big),
\label{eq7}
\end{align}
where, for simplicity, we denote $\Bar{H}(x) := h(x) + b, b \in \bar{B}(0, \epsilon).$ Let $x(t)$ denote any solution of the differential inclusion \eqref{eq:ode}
with the initial condition $x(t(n)) = x_n$.  
Then, for all $t \geq t(n)$, we have
\begin{align}
     x(t(n+m)) &= x_n + \int_{t(n)}^{t(n+m)} \Bar{H}(x(z))\, dz \nonumber\\
    &= x_n 
    + \int_{t(n)}^{t(n+m)} \big( \Bar{H}(x(z)) - \Bar{H}(x([z])) \big)\, dz\nonumber 
   \\ & + \sum_{k=0}^{m-1} \alpha_{n+k} \Bar{H}(x(t(n+k))),
\label{eq8}
\end{align}
where $[z] := \max \{\, t(k) \mid t(k) \leq z \,\}.$ Subtracting~\eqref{eq8} from~\eqref{eq7}, we obtain  
\begin{equation}
    \begin{split}
           & \norm{\Bar{X}(t(n+m)) - x(t(n+m))} 
           \\  \leq & \underbrace{\int_{t(n)}^{t(n+m)}  \norm{\Bar{H}(x(z)) - \Bar{H}(x([z])) }  dz}_{P_1}  + \underbrace{\norm{\sum\limits_{k=0}^{m-1} \alpha_{n+k} M_{n+k+1}}}_{P_2}
           \\ & + \underbrace{\sum\limits_{k=0}^{m-1} \alpha_{n+k} \norm{\Bar{H}(x(t(n+k)) - \Bar{H}(\Bar{X}(t(n+k))}}_{P_3}. 
    \end{split}
    \label{Mimoia}
\end{equation}

We now derive bounds on each of the terms $P_1$, $P_2$, and $P_3$ separately.

\subsubsection*{Bound on $\mathbf{P_1}$}

\begin{equation}
    \begin{split}
     &   \int_{t(n)}^{t(n+m)}  \norm{\Bar{H}(x(z)) - \Bar{H}(x([z])) }  dz
     \\ \leq &  \int_{t(n)}^{t(n+m)}  \norm{h(x(z)) - h (x([z])) } dz + \int_{t(n)}^{t(n+m)}  \norm{b} dz
     \\ \leq & L \int_{t(n)}^{t(n+m)}  \norm{x(z) - x([z]) } dz + \epsilon (T+1).
    \end{split}
    \label{12eq}
\end{equation}

Without loss of generality, we assume that $\alpha_n \leq 1$ for all $n$.  
Then, from the definition of $t(n+m)$, it follows that
\[
t(n+m) \leq t(n) + T + 1.
\]

To bound the first term on the right-hand side of~\eqref{12eq}, we use Lemma~\ref{Lemma1}.  
In particular, we have
\begin{equation}
\begin{split}
   & \int_{t(n)}^{t(n+m)} \|x(z) - x([z])\|\, dz
  \\  &= \sum_{k=0}^{m-1} \int_{t(n+k)}^{t(n+k+1)} 
        \|x(z) - x(t(n+k))\|\, dz \\
    &\le \sum_{k=0}^{m-1} \int_{t(n+k)}^{t(n+k+1)} 
        \int_{t(n+k)}^{z} \|\dot{x}(s)\|\, ds\, dz \\
    &\le \sum_{k=0}^{m-1} \int_{t(n+k)}^{t(n+k+1)} 
        \int_{t(n+k)}^{z} \big( \|h(x(s))\| + \|b(s)\| \big)\, ds\, dz \\
    &\le C_T \sum_{k=0}^{m-1} \alpha_{n+k}^2,
\end{split}
\label{eq:T1_final_bound}
\end{equation}
where the last inequality follows from Lemma~\ref{Lemma1}. Thus, from \eqref{12eq} we obtain 
\begin{align*}
    \int\limits_{t(n)}^{t(n+m)}  \norm{\Bar{H}(x(z)) - \Bar{H}(x([z])) }  dz \leq  C_T  L \sum\limits_{k=0}^{m-1} \alpha_{n+k}^2  + \epsilon(T+1).
\end{align*}
         
\subsubsection*{Bound on $\mathbf{P_2}$}

Recall that $\psi_{n,\, n+m} = \sum_{k=0}^{m-1} \alpha_{n+k} M_{n+k+1}$. In view of the Lemma~\ref{lemma:noise_limit}, it follows that for any fixed $m \in \mathbb{N}$,
\[
\lim_{n \to \infty} \psi_{n,\, n+m} = 0, \qquad \text{almost surely.}
\]

\subsubsection*{Bound on $\mathbf{P_3}$}

\begin{equation*}
    \begin{split}
       & \sum\limits_{k=0}^{m-1} a_{n+k} \norm{\Bar{H}(x(t(n+k)) - \Bar{H}(\Bar{X}(t(n+k))}
       \\ \leq & L \sum\limits_{k=0}^{m-1} \alpha_{n+k} \norm{x(t(n+k)) - \Bar{X}(x(t(n+k))} + \epsilon(T+1).
    \end{split}
\end{equation*}

The last inequality is true because of the Lipschitz continuity of $h$ and  the definition of $t(n+m)$. 

Thus, from~\eqref{Mimoia}, we obtain
\begin{equation}
\begin{split}
   & \|\Bar{X}(t(n+m)) - x(t(n+m))\| 
   \\  \le & C_T L \sum_{k=0}^{m-1} \alpha_{n+k}^2 
        + 2 \epsilon (T+1) 
        + \|\psi_{n,\, n+m}\| \\
    &\quad + L \sum_{k=0}^{m-1} \alpha_{n+k} 
        \|x(t(n+k)) - \Bar{X}(t(n+k))\|.
\end{split}
\label{eq:Mimoia_final}
\end{equation}

Finally, by applying the discrete-time Grönwall inequality to~\eqref{eq:Mimoia_final}, we obtain
\[
\sup_{0 \le i \le m} 
\|\Bar{X}(t(n+i)) - x(t(n+i))\|
\le K_{n,T}\, e^{L(T+1)}
\].
\end{proof}

\bibliographystyle{IEEEtran}
\bibliography{ref}

%---------------------------

\end{document}